\definecolor{vert}{rgb}{0,0.6,0}
\theoremstyle{plain}
\newtheorem{thm}{Theorem}[section]
\newtheorem{defn}{Definition}
\newtheorem{ex}{Example}
\newtheorem{lem}[thm]{Lemma}
\newtheorem{cor}[thm]{Corollary}
\newtheorem{prop}[thm]{Proposition}
\theoremstyle{remark}
\newtheorem{rem}{\bf{Remark}}
\numberwithin{equation}{section}
\newcommand{\C}{\mathbb{C}}
\newcommand{\E}{\mathbb{E}}
\newcommand{\N}{\mathbb{N}}
\newcommand{\bP}{\mathbb{P}}
\newcommand{\R}{\mathbb{R}}
\newcommand{\bS}{\mathbb{S}}
\newcommand{\USC}{{\rm USC\,}}
\newcommand{\LSC}{{\rm LSC\,}}
\newcommand{\Li}{L^{\infty}}
\newcommand{\Lip}{{\rm Lip\,}}
\newcommand{\bU}{\partial U}
\newcommand{\cU}{\overline U}
\newcommand{\al}{\alpha}
\newcommand{\del}{\delta}
\newcommand{\ep}{\varepsilon}
\newcommand{\lam}{\lambda}
\newcommand{\om}{\omega}
\newcommand{\Del}{\Delta}
\newcommand{\ol}{\overline}
\newcommand{\ul}{\underline}
\newcommand{\pl}{\partial}
\newcommand{\dist}{{\rm dist}\,}
\newcommand{\tr}{{\rm tr}\,}
\begin{document}
\title[Weakly coupled systems of the infinity Laplace equations]
{Weakly coupled systems of \\
the infinity Laplace equations}

\author{H. Mitake}
\address[H. Mitake]{
Institute for Sustainable Sciences and Development, 
Hiroshima University 1-4-1 Kagamiyama, Higashi-Hiroshima-shi 739-8527, Japan}
\email{hiroyoshi-mitake@hiroshima-u.ac.jp}

\author{H. V. Tran}
\address[H. V. Tran]{Department of Mathematics, 
The University of Chicago, 5734 S. University Avenue Chicago, Illinois 60637, USA}
\email{hung@math.uchicago.edu}

\date{\today}
\keywords{Infinity Laplace Equations; Comparison With Cones; Tug-of-War; Piecewise-deterministic Markov processes; Weakly Coupled Systems; Viscosity Solutions}
\subjclass[2010]{
Primary 
35D40, 
35J47, 
35J70; 
Secondary 
49L20, 
}

\thanks{
The work of HM was partially supported by JST program to disseminate tenure tracking system, and JSPS KAKENHI \#24840042.  
}

\maketitle

\begin{abstract}
We derive the weakly coupled systems of the infinity Laplace equations via a tug-of-war game introduced by Peres, Schramm, Sheffield, and Wilson (2009). We establish existence, uniqueness results of the solutions, and introduce a new notion of ``generalized cones" for systems. By using ``generalized cones" we analyze blow-up limits of solutions. 
\end{abstract}


\tableofcontents


\section{Introduction}
In this paper, we consider the Dirichlet problem for  
the weakly coupled systems of the infinity Laplace equations: 
\begin{equation}\label{INF}
\begin{cases}
-\Del_\infty u_i+\displaystyle\sum_{j=1}^{m}c_{ij}(u_i-u_j)=0 
&\text{in}\ U \ \text{for} \ i=1,\dots,m\\
u_i=g_i 
&\text{on} \ \partial U \ \text{for} \ i=1,\dots,m,
\end{cases}
\end{equation}
where 
$U$ is a bounded domain with a smooth boundary in $\R^n$, and
$(c_{ij})_{i,j=1}^{m}$ is a given constant matrix which describes 
the generator of an irreducible continuous-time Markov chain with 
$m$ states, and $g_i\in C(\bU)$ are given functions for $i=1,\ldots,m$.  
We give the precise assumption on $(c_{ij})$ in Section \ref{sec:derivation}.
Here $u_i$ are unknown functions and 
the operator $\Del_\infty$ is the so-called \textit{game infinity 
Laplacian}, i.e., for a smooth function $f$, 
\[
\Del_\infty f
:=
\frac{\tr\big(Df\otimes Df D^2f\big)}{|Df|^2}
=\dfrac{\sum_{i,j=1}^nf_{x_i}f_{x_j}f_{x_ix_j}}{|Df|^2}. 
\]
Throughout the paper, we write 
$f_{x_i}:=\pl f/\pl x_i$, $f_{x_ix_j}:=\pl^{2} f/\pl x_i\pl x_j$, 
and denote the gradient and the Hessian 
matrix of $f$ by $Df$ and $D^2f$, respectively.

The study of the infinity Laplacian began with pioneer works by Aronsson \cite{A1, A2} to understand a so-called \textit{absolutely minimizing Lipschitz} function. 
More precisely, the equation arises in the $\Li$ calculus of variations as the Euler--Lagrange equation for properly interpreted minimizers of all of energy functionals $u\mapsto\|Du\|_{\Li(V)}$ for all open sets $V \subseteq U$. 
 Aronsson achieved existence results and pointed out that we cannot expect the classical solutions in general. However, he could not prove uniqueness and stability results.

It turned out that the theory of viscosity solution is an appropriate instrument for the study of infinity Laplacian. 
Jensen \cite{J} gave fundamental results on 
the comparison principle and hence uniqueness of the single infinity Laplace equation in the viscosity solution sense, and generated considerable interest in the theory.
Nowadays, there are a great number of works related to the infinity Laplace equation. 
We deal only with viscosity solutions in this paper, and therefore the term ``viscosity" may be omitted henceforth.

Peres, Schramm, Sheffield, and Wilson \cite{PSSW} showed that the infinity Laplace equation also arises in the study of certain two-player, zero-sum stochastic games. They introduced a random-turn game called \textit{$\ep$-tug-of-war}, in which two players try to move a token in an open set $U$ toward a favorable spot on the boundary $\pl U$ corresponding to a given payoff function $g$ on $\partial U$. 
Inspired by this work, we derive the system of the infinity Laplace equation 
\eqref{INF}, which will be described in more detail in Section \ref{sec:derivation}. 
In our setting, we consider an $\ep$-tug-of-war 
 game with $m$ modes $\{1,\dots,m\}$ and
$m$ corresponding the number of payoffs $\{g_1,\dots,g_m\}$ on $\bU$.  
The mode changes  when the players move the token each time,
 which is controlled by a piecewise-deterministic Markov
processes \eqref{markov} in Section \ref{sec:derivation}. 
Our natural interest is an equilibrium value of the expectation of such payoffs in the game
as $\ep \to 0$. We can naturally derive the weakly coupled system \eqref{INF},
and achieve the existence of solutions via this procedure. 
We also can prove the comparison principle quite straightforwardly
by using an analogous argument to that of  Barles, Busca \cite{BB}, 
which implies the uniqueness of solutions.

Our main contribution of this paper is the introduction of the ``\textit{generalized cones}" for \eqref{INF} and some detailed analysis by using the new ``cones". 
One of the key tools to analyze the infinity Laplacian is
the \textit{comparison with cones} principle, which was first introduced by Crandall, Evans and Gariepy \cite{CEG}.  
This gives not only a characterization of solutions but also key ingredients to prove the regularity of solutions (see \cite{Sa,ESa,ESm}). 
See \cite{C} for more details.  
By considering spherically symmetric solutions of \eqref{INF}, 
we derive a class of particular solutions, which give ``generalized cones" for \eqref{INF} as described in Section \ref{sec:cone}. 
Then we establish the result, Theorem \ref{thm:cone}, on the comparison with 
``generalized cones" principle. 

By using Theorem \ref{thm:cone}, we show that all blow-up limits of solutions up to passing
some subsequences are affine. This was proved in \cite{CEG} for single equation. 
It turns out that here we need to catch up a key ingredient, Lemma \ref{lem:Ll}, for systems, which does not appear in the context of the single equation. 
It is still hard to explain this Lemma in a clear and intuitive way, but it is enough informative to 
achieve the claim, Theorem \ref{thm:blow-up}.
See Remark \ref{rem:ball-sphere} for some interpretation of Lemma \ref{lem:Ll}.

We emphasize that the ``generalized cones" and their behaviors are widely different
from these of single equation case (see Remarks  \ref{rem:entire}, \ref{rem:ball-sphere}, and Example \ref{ex1} for some details.)
We finally give important remarks on \eqref{INF} in Section \ref{sec:interpretation}. 
Some of the explanation is just purely heuristic, but it does 
show some further complicated and interesting
 structure of this system.

We thank Bob Jensen for his fruitful discussions.

\section{Derivation of weakly coupled systems by a tug-of-war game}\label{sec:derivation}
Let $U\subset\R^n$ be a bounded domain with smooth boundary, which is the place where the game is played by two persons, player I and player II.
Suppose that there are  $m$ modes: mode $1$,\ \dots, mode $m$, and $m$ corresponding the number of given functions  $g_i\in C(\bU)$ for 
$i=1,\dots,m$. We call $g_i$ the payoff function on the boundary of $U$ corresponding to mode $i$ for $1\leq i \leq m$.
We consider the following two-player, zero-sum game.  

Fix a number $\ep>0$, a token $x_0:=x\in U$, and a mode $m_0:=i\in\{1,\dots,m\}$.  
Suppose that both players start the game at position $x_0=x$ and mode $m_0=i$,
and have the same position and mode all the time. 
At each time step $t_k:=\ep^2 k$ for $k\in \N$, the players toss a fair coin and the winner of the toss is allowed to choose a next token $x_k \in \ol{B}(x_{k-1},\ep)\cap\cU$, and the mode is switched from $m_{k-1}$ to mode $m_k=j$ for any $j\in \{1,\dots,m\}$ with the probability which is determined by a piecewise-deterministic Markov process introduced by Davis \cite{D}. 
The change from modes to modes with the starting point $m_0=i$ is determined by a continuous-time Markov chain on $[0,\infty)$: $\nu(0)=i$, and for  $\Del s >0$,
\begin{equation}\label{markov}
\bP\big(\nu(s+\Del s)=j\mid 
\nu(s)=i\big)=\frac{c_{ij}}{2}\Del s+o(\Del s) \quad 
\textrm{as} \ \Del s\to0 \ \textrm{for} \ i\not=j, 
\end{equation}
where 
$c_{ij}$ are given constants satisfying 
\begin{equation}\label{gen-cij}
\ c_{ij}> 0 \ \text{for}\ i \ne j,\quad \text{and} \quad
\sum_{j=1}^{m} c_{ij}=0, 
\end{equation}
and $o:[0,\infty)\to[0,\infty)$ is a function satisfying $o(r)/r\to0$ as $r\to0$. 
After $k$ steps, if $x_k \in U$ then the game moves to step $k+1$. Otherwise, if $x_k \in \partial U$ then the game ends and player II pays the payoff $g_{m_k}(x_k)$ to player I as they are at mode $m_k=\nu(t_k)$. Notice that the change of modes is determined solely by the Markov chain \eqref{markov}, and is not determined by the two players. In particular, $\nu(t_k)$ can take any value in $\{1,\dots,m\}$ with probability determined by \eqref{markov}. The expected payoff is 
\[
\E_{i}\big[g_{\nu(t_k)}(x_k)\big].
\]

A \textit{strategy} for a player is a way of choosing the players' next move as a function of all previous information (played moves, all known coin tosses and known states.)  
It is a map from the set of partially played games to moves (or in the case of a random strategy, a probability distribution on moves.) Usually, one would think of a good strategy as being Markovian, i.e., as a map from the current state to the next move. However, in some settings, it is also useful to allow more general strategies that take into account the history. 

We consider the \textit{value} which the players get. 
Of course player I wants to maximize the expected payoff, 
while player II wants to minimize it in this tug-of-war game.
Let $S_{I}$ and $S_{II}$ be the strategies of player I and player II, 
respectively, and then we define the cost functions by   
\begin{align*}
J_{i}^{\ep}(S_I,S_{II})(x)
:=
\left\{
\begin{array}{ll}
\E_{S_{I},S_{II}}\E_{i}\big[g_{\nu(t_k)}(x_k)\big] & 
\textrm{if the game terminates with probability one}, \\
-\infty & 
\textrm{otherwise},  
\end{array}
\right. 
\end{align*}
where $x$ and $i$ are the starting point and mode of the game.
The value of the game for player I is then  defined as
\[
u^{\ep, I}_{i}(x):=\sup_{S_I}\inf_{S_{II}}J^{\ep}_{i}(S_I,S_{II})(x). 
\]

Note intuitively that if $x_l=x$ is far away from the boundary, i.e., 
$\dist(x,\partial U)>\ep$, then 
$x_{l+1}:=x_l+\ep v$ for some unit vector $v \in \bS^{n-1}$. On the other hand,  
if $\dist(x,\partial U) \leq \ep$, then 
$x_{l+1}$ could be any point on $\partial U \cap \ol{B}(x,\ep)$. 
We can easily get the dynamic programming principle associated with 
the value function: 
\begin{equation}\label{DPP1}
u_i^{\ep,I}(x)=
\frac{1}{2} \left \{  
\max_{y\in\partial B(x,\ep)}\E_{i}\big[u_{\nu(\ep^2)}^{\ep,I}(y)\big]
 +\min_{y\in\partial B(x,\ep)}\E_{i}\big[u_{\nu(\ep^2)}^{\ep,I}(y)\big] \right\},   
\end{equation}
since the players use a fair coin.

Let us suppose that 
\begin{equation}\label{conv}
u_{i}^{\ep,I}\to u_i^{I} \quad \text{uniformly on} \ U \ \text{as} \ \ep\to0, 
\end{equation}
and prove that $(u_1^{I},\dots,u_m^{I})$ solves the system \eqref{INF} by a heuristic argument using the dynamic programming principle.
We write $u^{\ep}_{i}, u_i$ for $u^{\ep, I}_{i},u^I_i$ respectively by abuse of notations.

Fix $i\in\{1,\cdots,m\}$. 
Note that the evolution of the jump process which is given by \eqref{markov} 
is deterministically governed by an ordinary differential equation: 
\begin{equation}\label{ODE}
\begin{cases}
(\rho^{i}_{k})_t + \displaystyle \frac{1}{2}\sum_{j=1}^m c_{jk} \rho^{i}_j=0 \quad &\text{in}\ (0,\infty)  \\ 
\rho_{k}^{i}(0)=\del_k^i \quad &\text{for}\ k=1,\ldots,m,
\end{cases} 
\end{equation}
where $\del_k^i=1$ if $k=i$, and $\del_k^i=0$ otherwise for given $k \in \{1,\ldots,m\}$, 
and $\rho^i_k(s)=\mathbb P(\nu(s)=k \mid \nu(0)=i)$ for all $s\geq 0$.
It is straightforward to derive that $0 \le \rho_{k}^{i} \le 1$ for all $i,k$ and
$$
\sum_{k=1}^m \rho_{k}^{i}(s)=1, \quad \text{for} \ s \ge 0,
$$
and $\lim_{s\to \infty} \rho_{k}^{i}(s)=1/m$ for $k=1,\ldots,m$. 
On the other hand, the matrix $(c_{ij})_{i,j=1}^m$
has a simple eigenvalue $0$, and its other eigenvalues
$\lam_1,\ldots,\lam_{m-1}$ have
positive real parts. 
Hence $\rho_k^{i}$ can be written as
\begin{equation*} \label{cij-1}
\rho_k^{i}(s) = \frac{1}{m} + \sum_{l=1}^{m-1} a_{kl} e^{\lam_l s}
\end{equation*}
for some constants $a_{kl} \in \C$ for $1 \le k\le m,\  1 \le l \le m-1$.
By using $\rho_{k}^{i}$, we can rewrite \eqref{DPP1} in an explicit form as 
\begin{equation}\label{DPP2}
u_i^\ep(x)=
\frac{1}{2} \left \{  
\max_{y\in\partial B(x,\ep)}
\sum_{k=1}^{m}\rho_{k}^{i}(\ep^2)u^\ep_{k}(y)
+\min_{y\in\partial B(x,\ep)}\sum_{k=1}^{m}\rho_{k}^{i}(\ep^2)u^\ep_{k}(y) 
\right\}. 
\end{equation}

We only give here the formal calculation to derive the system of the partial differential equations which $(u_1,\dots,u_m)$ satisfies. 
Set 
\[v^\ep(x):=\sum_{l=1}^{m}\rho_{l}^{i}(\ep^2)u^\ep_{l}(x).\] 
The dynamic programing principle reads, in light of the Taylor expansion, 
if $|Dv^{\ep}(x)|\not=0$, then 
\begin{align*}
u_i^\ep(x)&\approx\frac{1}{2}\left( v^\ep\Big(x+\frac{\ep Dv^\ep(x)}{|Dv^\ep(x)|}\Big) 
+ v^\ep\Big(x-\frac{\ep Dv^\ep(x)}{|Dv^\ep(x)|}\Big)\right)\\
&=v^\ep(x) + \frac{1}{2}\ep^2 D^2v^\ep(x) \frac{Dv^\ep(x)}{|Dv^\ep(x)|} \cdot \frac{Dv^\ep(x)}{|Dv^\ep(x)|}+o(\ep^2). 
\end{align*}
Noting that 
\begin{align*}
&\lim_{\ep\to0}\frac{\rho_{i}^{i}(\ep^2)-1}{\ep^2}
=(\rho_{i}^{i})_{t}(0)=\sum_{j=1}^{m}\frac{c_{ij}}{2}(1-\del_{j}^{i}), \\
&\lim_{\ep\to0}\frac{\rho_{j}^{i}(\ep^2)-0}{\ep^2}
=(\rho_{j}^{i})_{t}(0)=-\frac{c_{ij}}{2} \quad \text{for} \ j\ne i, 
\end{align*}
in view of \eqref{gen-cij}, we get 
\[
\lim_{\ep\to0}\frac{u^\ep_i(x)-v^\ep(x)}{\ep^2}
=\frac{1}{2}\sum_{j=1}^{m}c_{ij}(u_{i}-u_{j})(x). 
\]
Therefore, divide the above relation by $\ep^2/2$ and let $\ep \to 0$ to achieve the conclusion. \medskip

We can prove the convergence \eqref{conv} by a similar argument 
to that of \cite[Theorem 1.3]{PSSW} and also 
make the above proof rigorous 
by using the notion of viscosity solutions. 
See \cite{PSSW, BEJ} for more details. 
Thus, we have 
\begin{thm}
We have $u_{i}^{\ep,I}$ converge uniformly on $U$ for $1\leq i \leq m$ 
as $\ep\to0$. 
Let the limit functions be $u_i^{I}$. 
Then $(u_1^{I},\dots,u_m^{I})$ is a viscosity solution of \eqref{INF}.
\end{thm}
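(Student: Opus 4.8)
The plan is to follow the now-standard three-step scheme for tug-of-war games: (i) derive uniform a priori bounds and equicontinuity for the family $\{u_i^{\ep,I}\}$; (ii) extract uniformly convergent subsequences by Arzel\`a--Ascoli and identify each subsequential limit as a viscosity solution of \eqref{INF} through the dynamic programming principle \eqref{DPP2}; and (iii) invoke the comparison principle for \eqref{INF} to conclude that the limit is unique, which upgrades subsequential convergence to convergence of the whole family as $\ep\to0$. Throughout I abbreviate $u_i^\ep:=u_i^{\ep,I}$ and write $\|g\|_\infty:=\max_{1\le i\le m}\|g_i\|_{L^\infty(\bU)}$. Since the game terminates immediately at a boundary point in the starting mode, each $u_i^\ep$ satisfies $u_i^\ep=g_i$ on $\bU$.

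First I would record the a priori bounds. Because the mode process $\nu$ from \eqref{markov} is driven by the Markov chain alone and is independent of the players' strategies and of the token position, the spatial dynamics is exactly that of the single-equation game of \cite{PSSW}. As each payoff $g_i$ is continuous on the compact set $\bU$, the saddle structure of \eqref{DPP1} gives $\|u_i^\ep\|_{L^\infty(\cU)}\le\|g\|_\infty$ uniformly in $\ep$ and $i$, once one checks, using a pull-toward-the-boundary strategy, that the game terminates with probability one so that the value is finite and \eqref{DPP1} indeed holds. For equicontinuity I would estimate $|u_i^\ep(x)-u_i^\ep(x')|$ for nearby $x,x'\in\cU$ started in the common mode $i$ by a coupling argument: run the two games with a shared realization of $\nu$ and have each player mirror its $x$-game move in the $x'$-game, freezing the displacement $x_k-x_k'$ until the first time one token reaches $\bU$. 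At that common stopping time the two mode values coincide, and the payoff difference is then governed by the modulus of continuity of the $g_k$ at the two nearby boundary positions together with the overshoot estimates of \cite{PSSW}, which use the smoothness of $\bU$. This yields a modulus of continuity for $u_i^\ep$ independent of $\ep$.

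With uniform boundedness and equicontinuity in hand, a diagonal extraction produces a subsequence $\ep_k\to0$ along which $u_i^{\ep_k}\to u_i^I$ uniformly on $\cU$ for every $i$ simultaneously, and the limits inherit $u_i^I=g_i$ on $\bU$. To identify $(u_1^I,\dots,u_m^I)$ as a viscosity solution I would make the heuristic computation of Section \ref{sec:derivation} rigorous. Fix $i$ and a smooth $\phi$ touching $u_i^I$ from above at an interior point $x_0$ with $D\phi(x_0)\ne0$; the DPP \eqref{DPP2} for $u_i^\ep$, the uniform convergence of every component $u_k^\ep\to u_k^I$, and a second-order Taylor expansion of $v^\ep:=\sum_{l}\rho_l^i(\ep^2)u_l^\ep$ at $x_0$ reduce, after division by $\ep^2/2$, to the inequality obtained from the derivative identities $(\rho_i^i)_t(0)=\tfrac12\sum_j c_{ij}(1-\del_j^i)$ and $(\rho_j^i)_t(0)=-\tfrac12 c_{ij}$; in the limit this gives $-\Del_\infty\phi(x_0)+\sum_{j=1}^m c_{ij}\big(u_i^I(x_0)-u_j^I(x_0)\big)\le0$, the subsolution inequality. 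The supersolution inequality is symmetric, and the degenerate case $D\phi(x_0)=0$ is handled by the usual separate argument for the game infinity Laplacian. Finally, since every subsequential limit is a viscosity solution of \eqref{INF} with the same boundary data, the comparison principle for \eqref{INF}, proved by the Barles--Busca-type argument of \cite{BB} alluded to in the introduction, forces all such limits to coincide, so the full family converges.

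The main obstacle is the equicontinuity estimate of the second step: the coupling must simultaneously control the spatial tug-of-war, including the overshoot of the token past $\bU$ where the smoothness of the boundary and the martingale and stopping-time estimates of \cite{PSSW} enter, and the mode dynamics, and one must verify that sharing the realization of $\nu$ across the two coupled games is legitimate, reading off the terminal modes at the common first exit time and then bounding the extra cost of continuing the second game to its own termination. Once this uniform modulus of continuity is secured, the remaining identification and uniqueness steps follow the single-equation template of \cite{PSSW} and the formal derivation already given.
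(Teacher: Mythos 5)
Your proposal is correct and follows essentially the route the paper itself indicates: the paper gives no detailed proof of this theorem, offering only the formal DPP/Taylor computation of Section \ref{sec:derivation} together with a citation to \cite[Theorem 1.3]{PSSW} for the convergence and to the viscosity-solution framework for the rigorous identification, which is precisely the scheme (uniform bounds and equicontinuity via coupling, Arzel\`a--Ascoli, identification of limits through \eqref{DPP2}, uniqueness via the comparison principle of Section \ref{sec:unique}) that you flesh out. You also correctly isolate the genuinely delicate point, namely the $\ep$-uniform modulus of continuity near $\bU$ and the legitimacy of sharing the realization of $\nu$ across the two coupled games, which is exactly where the smoothness of $\bU$ and the boundary estimates of \cite{PSSW} are needed.
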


Analogously, we define the value of the game for player II by 
\[
u^{\ep, II}_{i}(x):=\inf_{S_I}\sup_{S_{II}}J^{-}_{i}(S_I,S_{II})(x), 
\]
and we have 
\begin{thm}
We have $u_{i}^{\ep,II}$ converge uniformly on $U$ for $1\leq i \leq m$ 
as $\ep\to0$. 
Let the limit functions be $u_i^{II}$. 
Then $(u_1^{II},\dots,u_m^{II})$ is a viscosity solution of \eqref{INF}.
\end{thm}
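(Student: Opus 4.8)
The plan is to mirror the argument already sketched for player~I; the only genuine input is that the player~II value function $u_i^{\ep,II}$ satisfies a dynamic programming principle of exactly the same shape as \eqref{DPP1}. Since the coin is fair and the mode transitions \eqref{markov} are generated independently of the players' moves, conditioning on the first step gives
\[
u_i^{\ep,II}(x)=
\frac{1}{2}\Big\{
\max_{y\in\partial B(x,\ep)}\E_{i}\big[u_{\nu(\ep^2)}^{\ep,II}(y)\big]
+\min_{y\in\partial B(x,\ep)}\E_{i}\big[u_{\nu(\ep^2)}^{\ep,II}(y)\big]\Big\},
\]
which, using \eqref{ODE}, rewrites in the explicit form \eqref{DPP2} with $u^\ep$ replaced by $u^{\ep,II}$. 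Consequently the formal Taylor expansion performed above applies verbatim, so any uniform limit of the $u_i^{\ep,II}$ will be a solution of \eqref{INF}.

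First I would show that the family $\{u_i^{\ep,II}\}_{\ep>0}$ is uniformly bounded and asymptotically equicontinuous as $\ep\to0$, following \cite[Theorem~1.3]{PSSW} and \cite{BEJ}. Uniform boundedness follows once almost-sure termination of the game is established as in \cite{PSSW}, after which $\min_k\inf_{\bU}g_k\le u_i^{\ep,II}\le\max_k\sup_{\bU}g_k$. The oscillation estimates near and away from $\bU$ transfer from the scalar theory because the weights $\rho_k^{i}(\ep^2)$ entering \eqref{DPP2} are nonnegative and sum to one, so that $\sum_k\rho_k^{i}(\ep^2)u_k^{\ep,II}$ is a genuine convex combination and the coupling enters only at order $\ep^2$; this is a lower-order perturbation of the tug-of-war estimates. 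Arzel\`a--Ascoli then yields a subsequence along which $u_i^{\ep,II}\to u_i^{II}$ uniformly for each $i$.

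Next I would pass to the limit in this dynamic programming principle in the viscosity sense, exactly as in the heuristic derivation above, the degenerate-gradient case being absorbed into the viscosity test in the standard way; this shows the subsequential limit $(u_1^{II},\dots,u_m^{II})$ solves \eqref{INF}. Since \eqref{INF} has at most one solution by the comparison principle, proved via the Barles--Busca argument \cite{BB} as noted in the Introduction, the limit is independent of the chosen subsequence, so the whole family converges and \eqref{conv} holds for player~II.

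I expect the main obstacle to be the asymptotic equicontinuity estimate, which is the technical heart of \cite{PSSW} and which here must be run for all $m$ components simultaneously. What makes it close up is precisely that the Markov weights satisfy $\rho_i^{i}(\ep^2)=1-O(\ep^2)$ with the remaining weights of size $O(\ep^2)$, so the coupled oscillation bounds reproduce the scalar ones up to an $O(\ep^2)$ correction. As a slicker alternative one may exploit the symmetry $\Del_\infty(-f)=-\Del_\infty f$: the system \eqref{INF} is invariant under $(u_i,g_i)\mapsto(-u_i,-g_i)$, and player~II minimizing the payoffs $g_i$ is player~I maximizing the payoffs $-g_i$, so the statement follows directly from the player~I theorem applied to the data $-g_i$.
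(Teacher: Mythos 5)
Your main line is exactly what the paper does: the paper proves nothing new for player II, simply noting that the dynamic programming principle \eqref{DPP1}--\eqref{DPP2} holds verbatim for $u_i^{\ep,II}$ because the mode switching \eqref{markov} is independent of the players, and deferring convergence and the rigorous viscosity passage to \cite{PSSW, BEJ} just as for player I. One caution on your ``slicker alternative'': the identity $u_i^{\ep,II}[g]=-u_i^{\ep,I}[-g]$ is not quite automatic from $\sup\inf(-J)=-\inf\sup(J)$, because the cost functional assigns $-\infty$ on non-terminating games (a penalty aimed at the maximizer), and this convention must be flipped to $+\infty$ when the roles are exchanged --- this is why \cite{PSSW} defines the two values with opposite termination conventions; once that is accounted for, the symmetry reduction does go through and is a clean way to avoid redoing the equicontinuity estimates.
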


\begin{rem}\label{MT23}
We also refer to \cite{BEJ, MT2,MT3} and  the references 
for other studies of 
weakly coupled systems.   
More precisely, in Mitake and Tran \cite{MT2, MT3}, we investigated some properties of asymptotic limits 
of solutions to the weakly coupled system for Hamilton--Jacobi equations. 
It is quite important in both sense, intuitively and analytically, to look at the insight of solutions by considering 
the dynamic programming of the optimal control of the system 
whose states are governed by ODEs \eqref{ODE}, 
subject to random changes in the dynamics. 
See Barron, Evans, and Jensen \cite{BEJ} for some related phenomena.
\end{rem}


\section{Uniqueness result}\label{sec:unique}

As described in Section \ref{sec:derivation}, we can prove the existence of 
viscosity solutions of \eqref{INF} by using the tug-of-war game argument. 
In this section we investigate the uniqueness result for \eqref{INF}. 
We follow the arguments of Barles and Busca \cite{BB}. 
Henceforth we only consider the simple system with two equations and 
we assume $c_{12}=c_{12}=1, c_{11}=c_{22}=-1$ for an easy explanation. 
The general case follows quite straightforwardly.

We recall the definition of viscosity solutions of the weakly coupled system of the infinity Laplace equations. 
For a $C^2$ function $\varphi$ defined in a neighborhood of $x\in U$, one sets
$$
\Del_\infty^+ \varphi(x):=
\begin{cases}
\Del_\infty \varphi(x) \qquad &\text{if} \ D\varphi(x) \neq 0,\\
\max\{ D^2 \varphi(x) v\cdot v \mid  v\in\bS^{n-1}\}  &\text{if} \ D\varphi(x)=0,
\end{cases}
$$
and
$$
\Del_\infty^- \varphi(x):=
\begin{cases}
\Del_\infty \varphi(x) \qquad &\text{if} \ D\varphi(x) \neq 0,\\
\min\{ D^2 \varphi(x) v\cdot v \mid  v\in\bS^{n-1}\}  &\text{if} \ D\varphi(x)=0. 
\end{cases}
$$

\begin{defn}
A pair $(u_1,u_2) \in \USC(U)^2$ is a viscosity subsolution of \eqref{INF} if for any $i \in \{1,2\}$ and any  test function $\varphi\in C^2(U)$ such that if $u_i-\varphi$ has a local maximum at $x_0 \in U$ then
$$
-\Del_\infty^+ \varphi(x_0)+u_i(x_0)-u_j(x_0) \leq 0, 
$$
where $j=3-i$.

A pair $(u_1,u_2) \in \LSC(U)^2$ is a viscosity supersolution of \eqref{INF} if for any $i \in \{1,2\}$ and any  test function $\varphi \in C^2(U)$ such that if $u_i-\varphi$ has a local minimum at $x_0 \in U$ then
$$
-\Del_\infty^- \varphi(x_0)+u_i(x_0)-u_j(x_0) \geq 0, 
$$
where $j=3-i$.

We say that $(u_1,u_2)$ is a viscosity solution of \eqref{INF} if $(u_1,u_2)$ is both a subsolution and a supersolution of \eqref{INF}. 
\end{defn}

We first give the Hopf Lemma, which is an essential tool to achieve uniqueness. 
We only state the results for supersolutions. The results for subsolutions are the same with obvious changes.

\begin{lem}[The Hopf Lemma]\label{lem:Hopf}
Let $V$ be an open set such that $\ol V\subset U$.
Assume that $(u_1,u_2)$ is a supersolution of \eqref{INF} and that there exists $x_0 \in \partial V$ such that 
\begin{equation*}
u_1(x_0)=\min_{i=1,2} \min_{x\in U} u_i(x) \quad \text{and} \quad u_1(x_0) < u_i(x) \quad \text{for} \ i =1,2, \ x \in V.
\end{equation*}
Assume further that $V$ satisfies the interior ball condition at $x_0$, namely, there exists an open ball $B \subset V$ with $x_0 \in \partial B$. Then
\begin{equation*}
\liminf_{s\to0}\frac{u_1(x_0-s\nu(x_0))-u_1(x_0)}{s}>0, 
\end{equation*}
where $\nu(x_0)$ is the outward normal vector to $\partial V$ at $x_0$.
\end{lem}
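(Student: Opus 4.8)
The plan is a barrier argument carried out in the viscosity framework, the novelty compared with the scalar infinity Laplacian being that the coupling term must be controlled using the global minimality of $u_1(x_0)$. \emph{Setup.} Using the interior ball condition I would fix a ball $B(y_0,r)\subset V$ with $x_0\in\pl B(y_0,r)$, so that $\nu(x_0)=(x_0-y_0)/r$, and work on the annulus $A:=B(y_0,r)\setminus\ol{B}(y_0,r/2)$, whose closure is contained in $U$. Writing $\mu:=u_1(x_0)=\min_{i}\min_{U}u_i$, the hypotheses give $u_1,u_2\ge\mu$ on $U$ and $u_1>\mu$ on $V$; in particular $u_1-\mu\ge\del>0$ on the compact inner sphere $\pl B(y_0,r/2)\subset V$, with $\del>0$ because $u_1$ is lower semicontinuous.

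\emph{Barrier.} I would take the radial function $w(x):=C\big(e^{-\al\rho}-e^{-\al r}\big)$, where $\rho:=|x-y_0|$, $\al=1$, and $C>0$ is to be fixed. The key computation is that for a radial profile $h(\rho)$ one has $\Del_\infty\big(h(\rho)\big)=h''(\rho)$, so on $A$ the gradient $Dw$ never vanishes and $\Del_\infty w=C\al^2e^{-\al\rho}>0$. Moreover $w(x_0)=0$, $w>0$ inside $A$, and the inward radial derivative satisfies $-h'(r)=C\al e^{-\al r}>0$. Choosing $C$ small enough that $w\le\del$ on the inner sphere, I obtain $u_1-\mu-w\ge0$ on $\pl A$ (on the outer sphere $w=0\le u_1-\mu$, and on the inner sphere $w\le\del\le u_1-\mu$).

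\emph{Comparison.} The heart of the proof is to show $u_1-\mu\ge w$ on $\ol A$. Suppose not: then $u_1-\mu-w$, being lower semicontinuous on the compact set $\ol A$ and nonnegative on $\pl A$, attains a strictly negative minimum $M<0$ at some necessarily interior point $\bar x\in A$. Then $\varphi:=\mu+w+M$ is $C^2$ near $\bar x$ and touches $u_1$ from below there, so the supersolution inequality yields $-\Del_\infty w(\bar x)+u_1(\bar x)-u_2(\bar x)\ge0$. Since $u_2(\bar x)\ge\mu$ and $u_1(\bar x)=\mu+w(\bar x)+M$, the coupling term is bounded by $u_1(\bar x)-u_2(\bar x)\le w(\bar x)+M$, whence $C\al^2e^{-\al\rho(\bar x)}\le w(\bar x)+M$. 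With $\al=1$ this reads $Ce^{-\rho(\bar x)}\le Ce^{-\rho(\bar x)}-Ce^{-r}+M$, i.e. $Ce^{-r}\le M<0$, a contradiction. Hence $u_1-\mu\ge w$ on $A$, and since $w(x_0)=0$, evaluating along $x_0-s\nu(x_0)=y_0+(r-s)\nu(x_0)$ gives
\[
\liminf_{s\to0}\frac{u_1(x_0-s\nu(x_0))-u_1(x_0)}{s}\ge\lim_{s\to0}\frac{h(r-s)-h(r)}{s}=-h'(r)=Ce^{-r}>0.
\]

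\emph{Main obstacle.} The essential difficulty, absent in the scalar case, is precisely the coupling term $u_1-u_2$: one cannot assert that $u_1$ is a viscosity supersolution of $-\Del_\infty u_1\ge0$ on its own, so a naive scalar barrier fails. The resolution above is that at an interior touching point the coupling is controlled from above by $w(\bar x)+M$ using only the global lower bound $u_2\ge\mu$, and this gain is balanced exactly against the strict positivity of $\Del_\infty w$; the choice $\al=1$ (indeed any $\al\ge1$) renders the two sides incompatible and closes the contradiction.
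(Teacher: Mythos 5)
Your proof is correct, and it resolves the one genuinely system-specific difficulty---the coupling term---by a different mechanism than the paper's. The paper builds the \emph{vector} barrier $(w,w)$ with $w(x)=e^{-\al|x|^2}-e^{-\al R^2}$ on the annulus; this is a \emph{strict subsolution of the full system} because the coupling term $w-w$ vanishes identically, and at a hypothetical negative minimum of $\min_{i}\min_{A_R}(u_i-w)$ the index $k$ attaining the minimum satisfies $u_k(x_1)\le u_l(x_1)$, so the coupling contribution is nonpositive and the contradiction comes purely from $-\Del_\infty w<0$. You instead use a \emph{scalar} barrier touching $u_1$ alone and absorb the coupling via the global bound $u_2\ge\mu$, which at the touching point gives $u_1(\bar x)-u_2(\bar x)\le w(\bar x)+M$ with $M<0$; the price is that the barrier must satisfy the stronger scalar inequality $-\Del_\infty w+w<0$, i.e.\ the second derivative must dominate the zeroth-order term, which your choice $\al\ge 1$ delivers since $\Del_\infty w=C\al^2e^{-\al\rho}\ge Ce^{-\al\rho}>w$ and indeed yields the clean contradiction $Ce^{-\al r}\le M<0$. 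Both arguments are sound and both extend to general $m$ and general $(c_{ij})$ satisfying the standing assumptions: the paper's min-over-indices trick makes \emph{all} coupling terms $\sum_j c_{kj}(u_k-u_j)$ nonpositive with no quantitative calibration, while your route only compares one component but requires choosing $\al$ large relative to the coupling coefficients (here $\al\ge1$ suffices because $c_{12}=1$). The supporting details in your write-up check out: the radial identity $\Del_\infty h(\rho)=h''(\rho)$ where $Dw\ne0$, the lower semicontinuity of $u_1$ giving $\del>0$ on the inner sphere, the location of the negative minimum in the interior of the annulus, and the final derivative computation $-h'(r)=C\al e^{-\al r}>0$.
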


\begin{thm}[Strong Maximum Principle]\label{thm:SMP}
Assume that $U$ is connected, open, and bounded, and $(u_1,u_2)$ is a supersolution of \eqref{INF}.
Assume further that $\min_{i=1,2} \min_{U} u_i$ is attained at an interior point of $U$. Then $u_1=u_2 \equiv C$
for some constant $C$ in $U$.
\end{thm}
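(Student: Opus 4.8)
The plan is to isolate the (relatively) closed set on which the common minimum is realized, observe that the coupling forces \emph{both} components to attain it there, and then prove that this set is open; connectedness of $U$ will finish the argument. First I would set $\mu:=\min_{i=1,2}\min_{U}u_i$ and, relabelling if necessary, assume $u_1(x_*)=\mu$ at some interior $x_*$. Since $u_1,u_2\ge\mu$, I claim $\{u_1=\mu\}=\{u_2=\mu\}=:A$: at any $y$ with $u_1(y)=\mu$ the constant $\varphi\equiv\mu$ touches $u_1$ from below (a global minimum of $u_1-\varphi$), and because $D\varphi(y)=0$ gives $\Del_\infty^-\varphi(y)=\min_{v\in\bS^{n-1}}D^2\varphi(y)v\cdot v=0$, the supersolution inequality reduces to $u_1(y)-u_2(y)\ge0$, forcing $u_2(y)=\mu$, and symmetrically. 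Lower semicontinuity of $u_1,u_2$ then exhibits $A=\{u_1\le\mu\}\cap\{u_2\le\mu\}$ as a nonempty closed subset of $U$.

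Since $U$ is connected, it now suffices to prove $A$ is open, for then $A=U$ and $u_1\equiv u_2\equiv\mu=:C$. Arguing by contradiction, if $A$ fails to be open I would run the standard interior-ball construction: choose $z\in U\setminus A$ with $r:=\dist(z,A)<\dist(z,\partial U)$, so that $B(z,r)\subset U\setminus A$, $\ol{B(z,r)}\subset U$, and $\partial B(z,r)$ meets $A$ at an interior point $x_0$. On $V:=B(z,r)$ one has $u_1,u_2>\mu=u_1(x_0)$, and $V$ satisfies the interior ball condition at $x_0$, so Lemma \ref{lem:Hopf} applies and yields
\[
\liminf_{s\to0}\frac{u_1(x_0-s\nu(x_0))-u_1(x_0)}{s}>0,\qquad\nu(x_0)=\frac{x_0-z}{r}.
\]

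The final step is to convert this into a contradiction with the equation. Because $x_0$ is interior and $u_1(x_0)=u_2(x_0)=\mu$, the coupling term vanishes and the supersolution inequality for $u_1$ at $x_0$ collapses to $-\Del_\infty^-\psi(x_0)\ge0$ for \emph{every} $C^2$ function $\psi$ touching $u_1$ from below there. I would use the strictly positive inward slope just obtained — upgraded by means of the radial strict subsolution barrier built in the proof of Lemma \ref{lem:Hopf} — to produce such a $\psi$ with $D\psi(x_0)\ne0$ and $\Del_\infty\psi(x_0)>0$. Feeding $\psi$ into the supersolution inequality gives $-\Del_\infty\psi(x_0)\ge0$, contradicting $\Del_\infty\psi(x_0)>0$; hence $A$ is open.

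The hard part is precisely this last step, and it cannot be read off from the minimum alone. The Hopf Lemma records only a one-directional difference quotient, and — as the example $f(x)=|x|$ shows — an interior minimum of a merely lower semicontinuous (indeed Lipschitz) function is entirely compatible with a strictly positive inward slope; it is solely the supersolution structure that excludes such an \emph{upward corner}. Thus I must genuinely upgrade the one-sided estimate to a smooth subtangent of $u_1$ at $x_0$ with non-vanishing gradient and strictly positive infinity-Laplacian, which is cleanest to do by re-using the explicit barrier from the proof of Lemma \ref{lem:Hopf} rather than by manipulating the slope estimate in isolation. That barrier construction, not the surrounding topological bookkeeping, is where the real work lies.
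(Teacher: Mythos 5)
Your proof is correct, and it diverges from the paper's in the one step that actually matters. Up to the application of the Hopf Lemma the two arguments coincide: both identify the coincidence set $A=\{u_1=\mu\}=\{u_2=\mu\}$ via the constant test function (the paper records this at the start of the Hopf Lemma's proof), both reduce to showing $A$ is open by the interior-ball construction, and both reach a point $x_0\in A$ admitting an interior ball in $\{u_1,u_2>\mu\}$. Where you part ways is in how the positive inward slope is turned into a contradiction. The paper first replaces $u_i$ by inf-convolutions to make them semi-concave, invokes the property (DMP) to conclude that $u_1$ is differentiable at the minimum point $x_0$ with $Du_1(x_0)=0$, and contradicts the strictly positive one-sided derivative supplied by the Hopf Lemma. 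You instead go back to the radial barrier $w(x)=e^{-\alpha|x|^2}-e^{-\alpha R^2}$ from the Hopf Lemma's proof and use it directly as a $C^2$ subtangent of $u_1$ at $x_0$: after normalizing $\mu=0$ one has $w\le u_1$ on $A_R$ by the comparison step already carried out there, and $w\le 0\le u_1$ outside $B(0,R)$, so $u_1-w$ has a genuine local minimum at the interior point $x_0$; since $u_1(x_0)=u_2(x_0)$ kills the coupling term and $Dw(x_0)\neq0$, the supersolution inequality forces $\Del_\infty w(x_0)\le0$, contradicting $\Del_\infty w(x_0)=e^{-\alpha R^2}(4\alpha^2R^2-2\alpha)>0$. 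This is a legitimate and in fact more elementary route: it dispenses with the semi-concavity/inf-convolution machinery and with (DMP) entirely, and strictly speaking it uses only the barrier construction inside the Hopf Lemma's proof rather than its stated conclusion. Your accompanying remark that the one-sided slope estimate alone cannot finish the job (the function $|x|$ has an interior minimum with positive inward slope) is exactly the gap the paper fills with semi-concavity and that you fill with the barrier-as-test-function; both fills are valid. The only point worth making explicit in a write-up is the verification that $w$ subtangentially touches $u_1$ in a \emph{full} neighborhood of $x_0$ (not merely on the annulus $A_R$), which is where the normalization $u_i\ge\mu=0$ and the sign $w\le0$ on $\{|x|\ge R\}$ are used.
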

The proofs of Lemma \ref{lem:Hopf} and Theorem \ref{thm:SMP} are similar to those for the single equation. 
We give them for the sake of clarity in Appendix by using the arguments 
based on those in \cite{BB}.

We recall next the change of variable which were introduced 
in \cite{BB}. 
Assume that we have $U_i=\psi(u_i)$, where $\psi:\R \to \R$ is smooth and invertible. We could also write that $u_i=\varphi(U_i)$ for $\varphi=\psi^{-1}$. One could then compute that
\begin{align*}
&-\Del_\infty u_1+ u_1 - u_2\\
=\ &-\frac{\sum_{i,j}(U_1)_{x_i} (U_1)_{x_j}\left(\varphi''(U_1)(U_1)_{x_i} (U_1)_{x_j}+\varphi'(U_1)(U_1)_{x_i x_j}\right)}{|DU_1|^2}+\varphi(U_1)-\varphi(U_2)\\
= \ &-\varphi'(U_1) \Del_\infty U_1 -\varphi''(U_1) |DU_1|^2+ \varphi(U_1)-\varphi(U_2)\\
= \ &  \varphi' (U_1) \left(-\Del_\infty U_1 + \frac{-\varphi''(U_1)}{\varphi'(U_1)}|DU_1|^2 + \frac{\varphi(U_1)-\varphi(U_2)}{\varphi'(U_1)}  \right). 
\end{align*}

From the above computation, one could notice that for $|DU_1|>0$, the system is strictly monotone provided that
$$
\left(\frac{-\varphi''(s)}{\varphi'(s)}\right)'>0
\iff (\varphi''(s))^2 >\varphi'''(s) \varphi'(s).
$$

We will therefore select a family of $\{\varphi_\ep\}_{\ep>0}$ satisfying two properties:  
\begin{itemize}
\item[(i)]\, 
The function $\varphi_\ep$ is close to the identity function, $\varphi_\ep'>0$, and $\varphi_\ep'$ converges to $1$ locally uniformly in $\R$ as $\ep \to 0$; 
\item[(ii)]\, $\varphi_\ep''$ converges to $0$ locally uniformly in $\R$ as $\ep \to 0$, and $(\varphi_\ep''(s))^2 >\varphi_\ep '''(s)\varphi_\ep'(s)$ for all $s\in \R$.
\end{itemize}
As in \cite{BB}, we can find such functions $\varphi_\ep$ defined as
\begin{equation}\label{def:phi}
\varphi_\ep'(t)=\exp \left(\int_0^t \exp(-\ep^{-1}(s+\ep^{-1}))\,ds \right).  
\end{equation}

\begin{thm}[Comparison Principle]\label{thm:CP}
Assume that $(u_1,u_2)$, $(v_1,v_2)$ are, respectively, 
a bounded subsolution, and supersolution of \eqref{INF}, and that $u_i \leq v_i$ on $\partial U$ for $i=1,2$. Then $u_i \leq v_i$ in $\cU$ for $i=1,2$.
\end{thm}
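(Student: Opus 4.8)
The plan is to argue by contradiction and to use the change of variables displayed above to compensate for the fact that the coupling $\sum_j c_{ij}(u_i-u_j)$ is only monotone, not strictly so: the generator $(c_{ij})$ has $0$ as an eigenvalue, so \eqref{INF} is invariant under adding a common constant to all the $u_i$, and no comparison can hold without first manufacturing strict monotonicity. Fix $\ep>0$, let $\psi_\ep:=\varphi_\ep^{-1}$ with $\varphi_\ep$ as in \eqref{def:phi}, and set $U_i:=\psi_\ep(u_i)$, $V_i:=\psi_\ep(v_i)$. Since $\psi_\ep$ is smooth and strictly increasing, $(U_1,U_2)$ is a subsolution and $(V_1,V_2)$ a supersolution of the transformed system
\[
-\Del_\infty W_i+b_\ep(W_i)\,|DW_i|^2+\frac{\varphi_\ep(W_i)-\varphi_\ep(W_j)}{\varphi_\ep'(W_i)}=0,\qquad b_\ep:=-\frac{\varphi_\ep''}{\varphi_\ep'},\quad j=3-i,
\]
where, by property (ii), $b_\ep$ is strictly increasing. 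It suffices to prove $U_i\le V_i$ on $\cU$, because $\psi_\ep$ is increasing and this returns $u_i\le v_i$; the hypothesis $u_i\le v_i$ on $\bU$ transfers to $U_i\le V_i$ on $\bU$.

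Suppose for contradiction that $M:=\max_{i\in\{1,2\}}\max_{\cU}(U_i-V_i)>0$, choose a maximizing index $i_0$, and double the variables in the $i_0$-th equation by maximizing
\[
\Phi_\al(x,y):=U_{i_0}(x)-V_{i_0}(y)-\frac{|x-y|^4}{4\al}
\]
over $\cU\times\cU$. Writing $(\hat x,\hat y)$ for a maximum point, the usual penalization estimates give $|\hat x-\hat y|^4/\al\to0$ and, along a subsequence, $\hat x,\hat y\to\bar x$ with $\Phi_\al(\hat x,\hat y)\to M$; since $U_{i_0}\le V_{i_0}$ on $\bU$ and $M>0$, the limit $\bar x$ and, for small $\al$, the points $\hat x,\hat y$ lie in the interior. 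I choose the quartic penalization precisely so that the associated first-order vector is $p=|\hat x-\hat y|^2(\hat x-\hat y)/\al$, whose vanishing is equivalent to $\hat x=\hat y$ and forces the penalizing Hessian to vanish there. The Crandall--Ishii lemma then yields symmetric matrices $X\le Y$, and I write the subsolution inequality for $U_{i_0}$ at $\hat x$ against the supersolution inequality for $V_{i_0}$ at $\hat y$ and subtract.

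When $p\ne0$ the infinity-Laplacian contributions combine into $-\dfrac{(X-Y)p\cdot p}{|p|^2}\ge0$; the transformed first-order terms contribute $\big(b_\ep(U_{i_0}(\hat x))-b_\ep(V_{i_0}(\hat y))\big)|p|^2$, which is \emph{strictly} positive because $U_{i_0}(\hat x)>V_{i_0}(\hat y)$ and $b_\ep$ is strictly increasing; and the coupling difference is nonnegative up to an error that vanishes as $\al\to0$, since $i_0$ is a maximizing index and $c_{i_0 k}>0$ for $k\ne i_0$. The essential point of the Barles--Busca device is that this strictly positive first-order gap is exactly what the non-strict coupling could not supply, and for $\al$ small it rules out the case $p\ne0$.

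The genuinely delicate case, which I expect to be the main obstacle, is $p=0$, i.e.\ $\hat x=\hat y=\bar x$: there the first-order device is switched off exactly where $|DW|=0$, so nothing but the coupling and the pure second-order terms remain. Here the quartic penalization pays off, giving $X\le0\le Y$, whence $\Del_\infty^+U_{i_0}(\bar x)\le0\le\Del_\infty^-V_{i_0}(\bar x)$; feeding this into the two inequalities leaves only the coupling, which forces $U_{i_0}(\bar x)\le U_k(\bar x)$ and $V_{i_0}(\bar x)\ge V_k(\bar x)$, and hence --- since $i_0$ is maximizing --- that \emph{every} mode attains the value $M$ at $\bar x$ with $U_k(\bar x)=V_k(\bar x)+M$. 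To turn this coincidence into a contradiction I would invoke the two tools proved beforehand, the strong maximum principle, Theorem \ref{thm:SMP}, and the Hopf Lemma, Lemma \ref{lem:Hopf}, to preclude such an interior, gradient-degenerate touching of the modes unless $U_i\equiv V_i+M$ on a whole connected component, in conflict with $U_i\le V_i$ on $\bU$. Undoing the change of variables then yields $u_i\le v_i$ on $\cU$.
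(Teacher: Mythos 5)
Your overall architecture --- changing variables via \eqref{def:phi} to create strict monotonicity, then locating the difficulty at contact points where the gradient vanishes --- is the right diagnosis, but the proof has a genuine gap exactly at the point you flag as delicate, and the fallback you propose there does not work. When $\hat x=\hat y$ the doubling argument only tells you that every mode attains the (penalized) maximum at $\bar x$, with no gradient information; to conclude you then want to apply Theorem \ref{thm:SMP} and Lemma \ref{lem:Hopf} to the difference $U_i-V_i$. But those results, as stated and proved, concern interior extrema of a single sub- or supersolution tuple of \eqref{INF} (equivalently, comparison with constants); they say nothing about the difference of a subsolution and a supersolution, which satisfies no tractable equation here since the operator is fully nonlinear and degenerate. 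Using them to ``preclude an interior, gradient-degenerate touching of the modes'' is tantamount to assuming a strong comparison principle, i.e.\ the very statement under proof. A secondary weakness is in your $p\neq 0$ case: the strict gain $\bigl(b_\ep(U_{i_0}(\hat x))-b_\ep(V_{i_0}(\hat y))\bigr)|p|^2$ is of size $\delta_\ep|p|^2$ with no lower bound on $|p|$ as $\al\to 0$, while the change of variables introduces $o_\ep(1)$ errors into the coupling terms; without a bound $|p|\geq\delta>0$ at the contact points the strict term cannot absorb these errors.

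The paper closes exactly this gap by a different mechanism, with no doubling of variables. It regularizes by sup- and inf-convolution so that $u_i$ is semiconvex and $v_i$ semiconcave (hence, by (DMP), both are differentiable at maximum points of the difference), and studies the translated maxima $M(h)=\max_i\max_x(u_i(x+h)-v_i(x))$. Either assumption (H) holds --- along some sequence $h_n\to0$ the common gradient at every maximum point is nonzero, in which case (PCG) gives a uniform lower bound $|Du_i|\geq\delta(n)>0$ there and the Barles--Busca change of variables yields strict monotonicity and a contradiction --- or else the vanishing of $Du_{i_h}(x_h+h)$ at some maximum point for every small $h$ forces $0\in\partial M(h)$, hence $M\equiv M(0)$ near $0$. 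The crucial step is that this constancy converts the interior maximum of the \emph{difference} into an interior local maximum of $\max_i u_i$ \emph{itself}, to which the strong maximum principle legitimately applies. Your scheme has no analogue of this translation step, and without it the degenerate contact case cannot be resolved.
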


Before presenting the proof of the comparison principle, we recall two general  important properties of a semi-convex function $\ul{w}$ and a semi-concave function $\ol{w}$ in $U$, which will be used below in the proof of Theorem \ref{thm:CP}. 
\smallskip
\begin{itemize}
\item[(DMP)] Both $\ul{w}$ and $\ol{w}$ are differentiable at any local maximum points of $\ul{w}-\ol{w}$.
\smallskip
\item[(PCG)] For $w=\ul{w}$ or $\ol{w}$, if $w$ is differentiable at $x_0 \in U$ and if $\{x_n\}\subset U$ is a sequence of differentiable points of $w$ such that $x_n \to x_0$, then $Dw(x_n) \to Dw(x_0)$.
\end{itemize}

\begin{proof}[Proof of Theorem {\rm\ref{thm:CP}}]
We argue by contradiction. 
Suppose that 
$$
c:=\max_{i=1,2} \max_{x\in U} (u_i(x)-v_i(x))>0.
$$
We present the proof in several steps.

{\it Step 1.} 
We replace $(u_1,u_2)$ by $(u_1-c/2,u_2-c/2)$. 
We may assume further that $u_i$ are semi-convex and $v_i$ are semi-concave for $i=1,2$ by using sup and inf convolutions and restricting the problem to a slightly smaller domain  if necessary. 

We now perform a perturbation of $u_i$ as follows. For $\al >  0$, set $U_\al:=\{x\in U\,:\, \dist(x,\partial U)>\al\}$ and for $h\in \R^n$ with $|h|$ sufficiently small, we define
$$
M(h):=\max_{i=1,2}\max_{x\in \ol{U}_{|h|}}(u_i(x+h)-v_i(x))=u_{i_h}(x_h+h)-v_{i_h}(x_h)
$$
for some $i_h\in \{1,2\}$ and $x_h \in \ol{U}_{|h|}$.
As $M(0)>0$, for $|h|$ small enough, we have $M(h)>0$ and the above maximum is the same if we take it over $\ol{U}_\al$ for any $\al>0$ sufficiently small and fixed.
In particular, $x_h \in U_{|h|}$.
Note that actually at $x_h$, $u_1(x_h+h)-v_1(x_h)=u_2(x_h+h)-v_2(x_h)$.

{\it Step 2.} We now proceed the proof by assuming the following additional assumption, which will be verified in Step 3. Assume that

\begin{itemize}
\item[(H)] there exists a sequence $\{h_n\} \to 0$ such that: At any maximum point $y\in U_{|h_n|}$ of $\max_{i=1,2}\max_{x\in \ol{U}_{|h_n|}}(u_i(x+h_n)-v_i(x))$, we have $Du_{i}(y+h_n)=Dv_{i}(y) \neq 0$ for all $i\in\{1,2\}$ and all $n\in \N$.
\end{itemize}
In light of (H) together with (PCG), we yield the existence of a positive constant $\del(n)>0$ so that $|Du_i(y+h_n)|=|Dv(y)|>\del(n)$ for all such $y$ described in (H) and for $i=1,2$. Note that $\del(n)$ could vanish as $n\to \infty$ but it does not matter our analysis here.

By abuse of notation, we write $M(h_n), U_{|h_n|},x_{h_n}$ as $M(n), U_n, x_n$, respectively. We now perform the changes of variables as, for $\ep>0$ sufficiently small,
$$
U_i^\ep=\psi_\ep(u_i), \quad V_i^\ep=\psi_\ep(v_i) \quad \text{for}\ i=1,2.
$$
It is clear to see that $U_i^\ep$ are semiconvex and $V_i^\ep$ are semiconcave, where $\psi_{\ep}:=(\varphi_{\ep})^{-1}$ and $\varphi_{\ep}$ is defined by \eqref{def:phi}. 
We get that $\max_{i=1,2} \max_{x\in U_n} (U_i^\ep(x+h_n)-V_i^\ep(x))$ is achieved at some point $x_\ep\in U_n$ and by passing a subsequence if necessary, $x_\ep \to x_n$ as $\ep\to0$. As $|Du_i(x_n+h_n)|=|Dv_i(x_n)| >\del(n)$, we deduce further that for $\ep$ sufficiently small, $|DU_i^\ep(x_\ep+h_n)|=|DV_i^\ep(x_\ep)| \ge \del(n)/2$.

Note that this is enough for the system with $(U_1^\ep,U_2^\ep)$, $(V_1^\ep, V_2^\ep)$ to be strictly monotone as discussed above. We can then get the contradiction. See the proof of \cite[Lemma 3.1]{BB} for a more detailed discussion.

{\it Step 3.} We finally verify that (H) holds. This is indeed a very important property and is correct in light of the Hopf Lemma (i.e., Lemma \ref{lem:Hopf}) 
and the strong maximum principle (i.e., Theorem \ref{thm:SMP}).

Were (H) false, there would exist, for each $h$ with $|h|$ small, $x_h \in U_{|h|}$ which is a maximum point of $\max_{i=1,2} \max_{x\in U_h} (u_i(x+h)-v_i(x))$  so that $Du_{i_h}(x_h+h)=Dv_{i_h}(x_h)=0$ for some $i_h\in \{1,2\}$.
As $u_i-v_i$ is semiconvex, $M(h)$ is hence semiconvex in a neighborhood of $0$.
Now for any $k$ close to $h$, one could compute that, in light of $Du_{i_h}(x_h+h)=0$,
\begin{equation*}
M(k)\geq u_{i_h}(x_h+k)-v_{i_h}(x_h) \geq u_{i_h}(x_h+h)-C|h-k|^2-v_{i_h}(x_h)=M(h)-C|h-k|^2.
\end{equation*} 
Thus, $0 \in \partial M(h)$ for all $h$ with $|h|$ small, 
where $\pl M$ denotes the subdifferential of $M$. 
This yields that $M(h)\equiv M(0)$ in a neighborhood of $0$.

Now pick $x_0 \in U$ to be a maximum point of $\max_{i=1,2} \max_{x\in U} (u_i(x)-v_i(x))$. We note that, for $h$ sufficiently small so that $x_0 \in U_{|h|}$, and for $i\in \{1,2\}$,
$$
u_i(x_0)-v_i(x_0)=M(0)=M(h) \geq u_i(x_0+h)-v_i(x_0).
$$
Thus, $x_0$ is a point of local maximum of $\max_{i=1,2} u_i$. In light of the strong maximum principle, this actually implies that $u_i$, $v_i$ are constants in $U$, which gives the desired result.
\end{proof}

\begin{cor}
Let $u_{i}^{I}$,  $u_{i}^{II}$ be the functions defined by the limit functions 
described in Section {\rm\ref{sec:derivation}}. 
Then we have $u_{i}^{I}=u_{i}^{II}$ on $\cU$. 
\end{cor}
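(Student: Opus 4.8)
The plan is to obtain the identity as an immediate consequence of the existence results of Section~\ref{sec:derivation} together with the comparison principle, Theorem~\ref{thm:CP}. By the two existence theorems of Section~\ref{sec:derivation}, both $(u_1^{I},\dots,u_m^{I})$ and $(u_1^{II},\dots,u_m^{II})$ are viscosity solutions of \eqref{INF}; in particular each pair is simultaneously a subsolution and a supersolution. They are bounded because every value function is an average of the payoffs $g_i\in C(\partial U)$, which are bounded on the compact set $\partial U$. Thus both pairs are admissible inputs to Theorem~\ref{thm:CP}.

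First I would check that the two families of value functions carry the same boundary data. By the very construction of the game, the token stops the instant it reaches $\partial U$ and the payoff collected there is $g_{m_k}$; hence $u_i^{\ep,I}=g_i=u_i^{\ep,II}$ on $\partial U$ for each mode $i$. Passing to the uniform limit \eqref{conv} and its player-II analogue, and using that a uniform limit of continuous functions is continuous up to the boundary, I would conclude $u_i^{I}=g_i=u_i^{II}$ on $\partial U$ for every $i$. In particular the two boundary inequalities $u_i^{I}\le u_i^{II}$ and $u_i^{II}\le u_i^{I}$ on $\partial U$ both hold.

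The core of the argument is then a twofold application of comparison. Regarding $(u_1^{I},\dots,u_m^{I})$ as a subsolution and $(u_1^{II},\dots,u_m^{II})$ as a supersolution with matching boundary data, Theorem~\ref{thm:CP} gives $u_i^{I}\le u_i^{II}$ on $\cU$. Exchanging the roles of the two pairs produces the reverse inequality $u_i^{II}\le u_i^{I}$ on $\cU$. Combining the two yields $u_i^{I}=u_i^{II}$ on $\cU$, which is the desired conclusion; this is of course consistent with the game-theoretic expectation that the two players' values coincide in the continuum limit.

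The only genuine point of care is that Theorem~\ref{thm:CP} was stated for the two-mode system with the normalized coupling $c_{12}=c_{21}=1$, $c_{11}=c_{22}=-1$, whereas the corollary is phrased for the general $m$-mode system. As noted after the statement of Theorem~\ref{thm:CP}, the comparison argument extends without essential modification to arbitrary $m$ and to any $(c_{ij})$ satisfying \eqref{gen-cij}: the change of variables $U_i^{\ep}=\psi_\ep(u_i)$ is carried out componentwise, the strict-monotonicity computation is unchanged, and the Hopf lemma and the strong maximum principle (Lemma~\ref{lem:Hopf} and Theorem~\ref{thm:SMP}) are already formulated for the full system. Granting this extension, the proof presents no further obstacle.
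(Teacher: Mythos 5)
Your proposal is correct and is precisely the argument the paper intends: the corollary is stated immediately after Theorem \ref{thm:CP} with no separate proof, the point being that both limit families are bounded viscosity solutions of \eqref{INF} with the same boundary data $g_i$, so two applications of the comparison principle give both inequalities and hence equality. Your additional remarks on the boundary data and on the extension from the normalized two-mode case to general $m$ are consistent with the paper's own comments and add nothing that conflicts with its approach.
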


\begin{rem}
We can prove the uniqueness by using the game interpretation 
which is similar to that in \cite{PSSW}. 
We show both arguments here since we want to explain that to consider 
the system \eqref{INF} is natural in both sense, i.e., 
game theoretical and PDE points of view.  
We also refer to Armstrong and Smart \cite{AS} for a simple proof of the uniqueness of the single infinity Laplace equation by using comparison with cones and intuition from \cite{PSSW}. 
\end{rem}

\section{Comparison with ``generalized cones'' for systems}\label{sec:cone}
For the single infinite Laplace equation 
\begin{equation}\label{eq:single}
-\Del_{\infty}u=0 \quad \text{in} \ U,
\end{equation}
Crandall, Evans and Gariepy \cite{CEG} realized that comparison with cones 
characterizes subsolutions and supersolutions of \eqref{eq:single}, and nowadays it is well-known that 
this plays important roles in the establishment of regularity results of solutions of \eqref{eq:single}. See \cite{Sa,ESa,ESm}. 
In this section, we derive ``generalized cones'' for systems and establish comparison with ``generalized cones".

We first present one way to find the class of particular  
solutions of \eqref{eq:single}, and 
that cones are solutions of \eqref{eq:single} everywhere except the vertices.  
Let us find radially symmetric solution  $u$ of \eqref{eq:single}, i.e.    
\[
u(x)=\eta(|x|), 
\]
where $\eta:[0,\infty)\to\R$ is some smooth function.
We calculate, for $x\neq 0$,
\begin{align*}
&Du(x)=\eta'(|x|)\cdot \frac{x}{|x|}, \\ 
&D^{2}u(x)=\eta''(|x|)\cdot \frac{x\otimes x}{|x|^2}
+\eta'(|x|)\cdot\Big(I-\frac{x\otimes x}{|x|^2}\Big)\frac{1}{|x|^2}.  
\end{align*}
Plug these into \eqref{eq:single} to get that
\[
-\eta''(r)=0,   
\]
which implies that $\eta(r)=ar+b$ for any $a,b\in\R$. 
From these calculations, we establish that the cones
\begin{equation}\label{cone-single}
u(x)=a|x-x_0|+b \quad \text{for any} \ x_0\in \R^n,\  \text{and} \ a,b\in\R 
\end{equation}
are solutions of \eqref{eq:single} in $U\setminus\{x_0\}$.

\subsection{``Generalized cones''}
Following the idea above, we first find particular solutions of \eqref{INF} 
in the form of cones' like. 
We consider $u_i$ radially symmetric of the form
\[
u_i(x)=\eta_i(|x|),
\]
where  $\eta_i:[0,\infty)\to\R$ are smooth functions for $i=1,2$.
Assume that $(u_1,u_2)$ is a solution of \eqref{INF} in $\R^n \setminus \{0\}$.
Then $(\eta_1,\eta_2)$ satisfies 
\begin{equation}\label{INF-special}
\begin{cases}
-\eta_1''+\eta_1-\eta_2=0 \quad &\text{in}\ (0,\infty),\\
-\eta_2''+\eta_2-\eta_1=0  &\text{in}\ (0,\infty). 
\end{cases}
\end{equation}
Solving this system of ordinary differential equations with arbitrary initial data at $0$,  we get that, for $s>0$,
\[
\begin{cases}
\eta_1(s)=C_1e^{\sqrt{2}s}+C_2e^{-\sqrt{2}s}+as+b,\\
\eta_2(s)=-C_1e^{\sqrt{2}s}-C_2e^{-\sqrt{2}s}+as+b,  
\end{cases}
\]
where $C_1,C_2, a, b$ are arbitrary constants. 

We can then easily check that the pair $(\psi_1,\psi_2)$ defined by 
\begin{equation}\label{cone-system}
\begin{cases}
\psi_1(x):=C_1e^{\sqrt{2}|x-x_0|}+C_2e^{-\sqrt{2}|x-x_0|}+a|x-x_0|+b,\\
\psi_2(x):=-C_1e^{\sqrt{2}|x-x_0|}-C_2e^{-\sqrt{2}|x-x_0|}+a|x-x_0|+b,  
\end{cases}
\end{equation}
is a solution of \eqref{INF} in $\R^n \setminus \{x_0\}$ 
for any $x_0\in\R^n, C_1,C_2, a, b\in\R$. 
We call $(\psi_1,\psi_2)$ a pair of \textit{``generalized cones"}. 

\begin{rem}\label{rem:entire} 
It is clear to see that $\psi_1$ is differentiable at $x_0$ if only if $\sqrt{2}(C_1-C_2)+a=0$,
and $\psi_2$ is differentiable at $x_0$ if only if $\sqrt{2}(C_2-C_1)+a=0$. 
Thus $\psi_1, \psi_2$ are both differentiable at $x_0$ if and only if $C_1=C_2$ and $a=0$.
In particular, $(e^{\sqrt{2}|x-x_0|}+e^{-\sqrt{2}|x-x_0|},-e^{\sqrt{2}|x-x_0|}-e^{-\sqrt{2}|x-x_0|})$ is a solution of 
\eqref{INF} in the whole $\R^n$. Notice that this is a highly nontrivial fact for weakly coupled systems because 
it is not the case for the single equation. 

\end{rem}

\subsection{Comparison with ``generalized cones''}\label{subsec:cone}

We introduce the notion of comparison with ``generalized cones" following  the single case. 
\begin{defn}[Comparison with ``Generalized Cones"]
{\rm(i)} 
A pair $(u_1,u_2)\in C(\cU)^2$  \textit{enjoys comparison with ``generalized cones" from above in $U$} if  
$(u_1,u_2)$ satisfies that 
for any $x_0\in U$ and $r>0$ such that $\ol{B}(x_0,r)\subset U$, 
\[\text{if}\  u_i\le \psi_i \  \text{on} \ \pl B(x_0,r)\cup\{x_0\} \ \text{for} \ i=1,2, \  
\text{then} \  
u_i\le \psi_i \ \text{on} \ \ol{B}(x_0,r) \ \text{for} \ i=1,2,\] 
for any choices of $C_1,C_2,a,b\in \R$.\\
{\rm(ii)} 
A pair $(u_1,u_2)\in C(\cU)^2$ \textit{enjoys comparison with ``generalized cones" from below in $U$} if  
$(u_1,u_2)$ satisfies that 
for any $x_0\in U$ and $r>0$ such that $\ol{B}(x_0,r)\subset U$, 
\[\text{if}\  u_i\ge \psi_i \  \text{on} \ \pl B(x_0,r)\cup\{x_0\} \ \text{for} \ i=1,2, \  
\text{then} \  
u_i\ge \psi_i \ \text{on} \ \ol{B}(x_0,r) \ \text{for} \ i=1,2,\] 
for any choices of $C_1,C_2,a,b\in \R$.
\end{defn}

In the case of the single equation \eqref{eq:single}, to characterize subsolutions by using comparison with cone, 
one could choose in \eqref{cone-single}
\[
a:=\frac{\max_{|y-x_0|=r}u(y)-u(x_0)}{r}, \quad
b:=u(x_0).
\]
For comparison with  ``generalized cones" for systems, we need to appropriately 
choose $C_1, C_2, a, b$ in \eqref{cone-system}. 
In order to do so, we introduce the following notations. 
For $x_0\in U$, $r>0$ such that $\ol{B}(x_0,r)\subset U$, we set 
\begin{align*}
&M_{i}(x_0,r):=\max_{|y-x_0|=r}u_i(y), \\
&C_{1}(x_0,r):=\frac{-(u_1(x_0)-u_2(x_0)) e^{-\sqrt{2}r}}{2(e^{\sqrt{2}r}-e^{-\sqrt{2}r})}+\frac{M_{1}(x_0,r)-M_{2}(x_0,r)}{2(e^{\sqrt{2}r}-e^{-\sqrt{2}r})},\\
&C_{2}(x_0,r):=\frac{(u_1(x_0)-u_2(x_0)) e^{\sqrt{2}r}}{2(e^{\sqrt{2}r}-e^{-\sqrt{2}r})}-\frac{M_{1}(x_0,r)-M_{2}(x_0,r)}{2(e^{\sqrt{2}r}-e^{-\sqrt{2}r})},\\
&a(x_0,r):=\frac{M_{1}(x_0,r)+M_{2}(x_0,r)-(u_1(x_0)+u_2(x_0))}{2r},\\
&b(x_0):=\frac{u_1(x_0)+u_2(x_0)}{2}. 
\end{align*}

\begin{thm}[Characterization of Subsolutions and Supersolution of \eqref{INF}]\label{thm:cone}\ \\
Let $(u_1,u_2)\in C(\cU)^{2}$. 
The pair $(u_1,u_2)$ is a viscosity subsolution {\rm(}resp., supersolution{\rm)} of \eqref{INF} if and only if 
$(u_1,u_2)$ satisfies comparison with  ``generalized cones" from above {\rm(}resp.,  below{\rm)}.
\end{thm}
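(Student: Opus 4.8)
The plan is to mirror, at the level of structure, the Crandall--Evans--Gariepy characterization \cite{CEG} for the single equation \eqref{eq:single}, while carrying the coupling through the generalized cones \eqref{cone-system}. I would prove the two implications separately; since $(u_1,u_2)$ solves \eqref{INF} if and only if $(-u_1,-u_2)$ does, with sub/supersolutions and above/below cones interchanged (note $\Del_\infty(-u)=-\Del_\infty u$), it suffices to prove the subsolution / comparison-from-above equivalence and deduce the other by applying it to $(-u_1,-u_2)$.

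The direction ``subsolution $\Rightarrow$ comparison from above'' is soft. Fix $x_0\in U$, $r>0$ with $\ol B(x_0,r)\subset U$, and constants $C_1,C_2,a,b$ with $u_i\le\psi_i$ on $\pl B(x_0,r)\cup\{x_0\}$. Since $(\psi_1,\psi_2)$ is a classical, hence viscosity, solution of \eqref{INF} in $\R^n\setminus\{x_0\}$, it is a supersolution on the punctured ball $W:=B(x_0,r)\setminus\{x_0\}$. I would then apply the comparison principle (Theorem \ref{thm:CP}) on the bounded open set $W$, whose boundary is exactly $\pl B(x_0,r)\cup\{x_0\}$: the hypothesis gives $u_i\le\psi_i$ on $\pl W$, hence $u_i\le\psi_i$ on $\ol W=\ol B(x_0,r)$. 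The only point needing care is that $x_0$ is an isolated boundary point of $W$; this is harmless because all functions are continuous at $x_0$, so $u_i-\psi_i\to u_i(x_0)-\psi_i(x_0)\le0$ there and no positive interior maximum can concentrate at the vertex.

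For the converse ``comparison from above $\Rightarrow$ subsolution'' I would argue by contradiction. Suppose, say for $i=1$, that $\varphi\in C^2$ touches $u_1$ from above at $x_0$ (with $u_1(x_0)=\varphi(x_0)$) while $-\Del_\infty^+\varphi(x_0)+u_1(x_0)-u_2(x_0)>0$; in the nondegenerate case $D\varphi(x_0)=p\ne0$ write $\Del_\infty\varphi(x_0)=u_1(x_0)-u_2(x_0)-2\del$ with $\del>0$. The key idea is to build a generalized cone whose vertex is placed \emph{behind} $x_0$, namely $z:=x_0-r\hat p$ with $\hat p=p/|p|$, on the ball $B(z,2r)$ so that $x_0$ is an interior point. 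Writing the cone through its average and difference parts, $\tfrac12(\psi_1+\psi_2)(x)=a|x-z|+b$ is an ordinary linear cone while $\tfrac12(\psi_1-\psi_2)(x)=C_1e^{\sqrt2|x-z|}+C_2e^{-\sqrt2|x-z|}$, so the four parameters split into two shaping the average and two shaping the difference. Using only that $u_1\le\varphi$ near $x_0$ (to bound $u_1$ from above on $\pl B(z,2r)$ and at $z$ via a second-order Taylor expansion of $\varphi$) together with continuity of $u_2$, I would select $C_1,C_2,a,b$ so that $u_i\le\psi_i$ on $\pl B(z,2r)\cup\{z\}$ for both $i$, yet $\psi_1(x_0)<u_1(x_0)$, contradicting comparison with generalized cones from above.

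The heart of the matter, and the main obstacle, is the feasibility of this selection: the generalized cone must dominate \emph{both} components on the sphere and at the vertex while being violated by the single component $u_1$ at the interior point $x_0$, even though the test function controls only $u_1$. The mechanism is that, along \eqref{INF-special}, the cone has vertex curvature $\eta_1''(0)=\eta_1(0)-\eta_2(0)$, which to leading order equals $u_1(x_0)-u_2(x_0)$; matching the $O(r^2)$ terms of the three conditions on $\psi_1$ (domination on $\pl B(z,2r)$, domination at $z$, and violation at $x_0$) forces precisely $\Del_\infty\varphi(x_0)\ge u_1(x_0)-u_2(x_0)$, and the strict gap $2\del$ produced by the assumed failure is exactly the room that lets the free parameters be chosen; the difference part, and hence domination of $u_2$, is then arranged with vanishing slack since $u_2$ is only continuous and all points of $\pl B(z,2r)$ lie within $3r$ of $x_0$. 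The degenerate case $D\varphi(x_0)=0$ is handled the same way, taking $\hat p$ to be a unit vector realizing $\Del_\infty^+\varphi(x_0)=\max_{v\in\bS^{n-1}}D^2\varphi(x_0)v\cdot v$, and the supersolution / comparison-from-below statement follows, as noted, from the $(-u_1,-u_2)$ symmetry.
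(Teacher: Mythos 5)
Your proposal is correct in outline, and the forward direction (subsolution $\Rightarrow$ comparison from above, via Theorem \ref{thm:CP} on the punctured ball) is exactly what the paper does. The converse, however, is organized genuinely differently. The paper does not argue by contradiction: it uses the \emph{canonical} generalized cones $\psi_i[y,r]$ whose parameters $C_1(y,r),C_2(y,r),a(y,r),b(y)$ are determined by $u_i(y)$ and $M_i(y,r)$ so that the comparison hypotheses hold automatically, applies comparison on $B(y,r)$ with $y=x_0-sp$, evaluates the resulting inequality at $x_0$, and extracts the viscosity inequality through a double limit ($s\to0$ first, then $r\to0$) after Taylor-expanding the test function. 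Your route instead hand-crafts a single cone with vertex at $z=x_0-r\hat p$ on $B(z,2r)$ that dominates both components on $\pl B(z,2r)\cup\{z\}$ yet dips below $u_1$ at $x_0$. Decomposing into the average part $a\rho+b$ and the half-difference part $Q(\rho)=C_1e^{\sqrt2\rho}+C_2e^{-\sqrt2\rho}$, and pinning $P(0)\pm Q(0)$ to $\varphi(z)$ and $u_2(x_0)+\om(3r)$ respectively, one checks that the three remaining constraints involve, to leading order, only the single combination $a+Q'(0)$, and that they are simultaneously satisfiable precisely when $\Del_\infty\varphi(x_0)<2Q(0)+o(1)=u_1(x_0)-u_2(x_0)+o(1)$ --- which is the assumed failure. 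Crucially, the modulus of continuity of $u_2$ enters $Q(0)$ only additively and hence contributes $r^2\om(3r)=o(r^2)$ to the balance, so the $\del r^2$ gap survives; this confirms your ``vanishing slack'' claim and is the same mechanism (the coefficient of the $u_2$-data is $O(r)$ smaller than that of the $u_1$-data) that makes the paper's expansion close. What your approach buys is a single limit $r\to0$ and a more geometric picture of why the vertex curvature $\eta_1''(0)=\eta_1(0)-\eta_2(0)$ generates the coupling term; what it costs is that the feasibility of the parameter selection is the entire content of the proof and is only asserted, not computed, in your write-up --- that computation (and its analogue in the degenerate case $p=0$, where $|D\varphi(z)|=O(r)$ and the location of the maximum on $\pl B(z,2r)$ must be tracked more carefully) must be carried out before the argument is complete.
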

\begin{proof}
We only prove this property for subsolutions. 
It is obvious to check that if $(u_1,u_2)$ is a viscosity subsolution of \eqref{INF},  then $(u_1,u_2)$ satisfies comparison with ``generalized cones" 
by the comparison principle (i.e., Theorem \ref{thm:CP}).

We assume now that $(u_1,u_2)$ satisfies comparison with ``generalized cones" from above, and we will  prove that $(u_1,u_2)$ is a viscosity subsolution of \eqref{INF}. 
Fix $x_0\in U$ and choose $r>0$ so that $\ol{B}(x_0,2r)\subset U$. 
Take a test function $\phi\in C^2(U)$ such that $u_1-\phi$ attains a strict  maximum at $x_0$ with $u_1(x_0)=\phi(x_0)$. 
Set $p:=D\phi(x_0)$. 

By subtracting a suitable constant from both $u_1$ and $u_2$, we can assume that $u_1(x_0)=d, u_2(x_0)=0$ without loss of generality. 
For any $y\in B(x_0,r)$, let $\psi_{i}[y,r](\cdot)$ be the ``generalized cones" defined by \eqref{cone-system} with $C_i=C_i(y,r)$, $a=a(y,r)$, $b=b(y)$.  
Let $\psi_{i}[x_0,r](\cdot)$ be the ``generalized cones" defined by \eqref{cone-system} with $C_i=C_i(x_0,r)$, $a=a(x_0,r)$, $b=b(x_0)$ for $i=1,2$ which are defined above. 
Then we can easily check 
\[
\psi_{i}[x_0,r](x_0)=u_i(x_0), \ \text{and} \  
u_i(x)\le\psi_{i}[x_0,r](x) \ \text{for} \ x\in\partial B(x_0,r) \ \text{and} \ i=1,2. 
\]
Hence, by the assumption, $u_i(x)\le\psi_{i}[y,r](x)$ for all $x\in B(y,r)$ and  $i=1,2$.

Let $y:=x_0-sp$ and choose $s>0$ sufficiently small so that $s|p|<r$. 
Set $\chi(r):=e^{\sqrt{2}r}-e^{-\sqrt{2}r}$. 
Noting that $u_1\le \phi$ on $U$, $u_1(x_0)=\phi(x_0)$, and $u_1(x_0) \leq \psi_1[y,r](x_0)$,
we get 
\begin{align*}
\phi(x_0)\le&\,  
\frac{\chi(r-s|p|)}{2\chi(r)}(\phi(y)-u_2(y))
+\frac{\chi(s|p|)}{2\chi(r)}(\phi(z_{r,s})-M_2(r,y))\\
&+\frac{s|p|}{2r}(\phi(z_{r,s})+M_2(r,y))+
\frac{r-s|p|}{2r}(\phi(y)+u_2(y)),  
\end{align*}
where $z_{r,s}\in \pl B(y,r)$ so that 
$\phi(z_{r,s})=\max_{|z-y|=r}\phi(z)$. 
We rewrite this as 
\begin{align*}
& \frac{(r-s|p|)}{2r}(\phi(x_0)-\phi(y))+\frac{s|p|}{2r}(\phi(x_0)-\phi(z_{r,s}))\\
&+\frac{1}{2}\Big(\phi(x_0)-\frac{\chi(r-s|p|)}{\chi(r)}\phi(y)\Big)
-\frac{\chi(s|p|)}{2\chi(r)}\phi(z_{r,s})\\
\le&
-\Big(\frac{\chi(r-s|p|)-\chi(r)}{2\chi(r)}+\frac{s|p|}{2r}\Big)u_2(y)
+\Big(\frac{s|p|}{2r}-\frac{\chi(s|p|)}{2\chi(r)}\Big)M_2(y,r).   
\end{align*}

We may assume $z_{r,s}\to z_{r}\in \pl B(x_0,r)$ as $s\to0$ by taking a subsequence if necessary. 
Divide by $s>0$ and let $s\to 0$ to get 
\begin{align*}
&|p|^2+|p|\Big\{\Big(\frac{1}{2r}+\frac{\sqrt{2}}{\chi(r)}\Big)(\phi(x_0)-\phi(z_r))
+\frac{-2\sqrt{2}+\chi'(r)}{2\chi(r)}\phi(x_0)\Big\}\\
\le&\, 
|p|\Big(\frac{1}{2r}-\frac{\sqrt{2}}{\chi(r)}\Big)M_{2}(x_0,r),
\end{align*}
where we used the fact that $u_2(x_0)=0$ here on the right hand side. 

Assume first $p\ne0$. Divide the above inequality by $|p|>0$ to yield that
\begin{align*}
&|p| +\left(\frac{1}{2r}+\frac{\sqrt{2}}{\chi(r)}\right)(\phi(x_0)-\phi(z_{r}))
+\frac{(e^{r/\sqrt{2}}-e^{-r/\sqrt{2}})\phi(x_0)}{\sqrt{2}(e^{r/\sqrt{2}}+e^{-r/\sqrt{2}})}\\
\le&\,
\left(\frac{1}{2r}-\frac{\sqrt{2}}{\chi(r)}\right)M_{2}(x_0,r).
\end{align*}
We have used here an elemental calculation  
\[
\frac{-2\sqrt{2}+\chi'(r)}{2\chi(r)}=
\frac{(e^{r/\sqrt{2}}-e^{-r/\sqrt{2}})}{\sqrt{2}(e^{r/\sqrt{2}}+e^{-r/\sqrt{2}})}.
\]
Note that 
\[
\frac{\sqrt{2}}{\chi(r)}=\frac{1}{2r}+O(r),  
\]
and $|M_{1r}(x_0,r)-d|+|M_{2r}(x_0,r)|=\om(r)$ for some modulus $\om$.
Thus,
\[
|p|-\frac{\phi(z_r)-\phi(x_0)}{r}+\frac{(e^{r/\sqrt{2}}-e^{-r/\sqrt{2}})\phi(x_0)}{\sqrt{2}(e^{r/\sqrt{2}}+e^{-r/\sqrt{2}})}\le r\om(r).
\]

By the Taylor theorem, 
\[
\phi(z_r)-\phi(x_0)=
p\cdot(z_{r}-x_0)+\frac{1}{2}D^{2}\phi(x_0)(z_r-x_0)\cdot(z_r-x_0)
+o(r^2).
\]
Also note that 
\[
\frac{z_{r}-x_0}{r} \to \frac{D\phi(x_0)}{|D\phi(x_0)|}
=\frac{p}{|p|} 
\quad \text{as} \ r\to0, 
\]
since $\phi(z_r)=\max_{|z-x_0|=r}\phi(z)$. 
Hence, 
\[
|p|-p\cdot \frac{z_{r}-x_0}{r}-
\frac{1}{2}D^{2}\phi(x_0)\frac{z_r-x_0}{r}\cdot(z_r-x_0)-\frac{(e^{r/\sqrt{2}}-e^{-r/\sqrt{2}})\phi(x_0)}{\sqrt{2}(e^{r/\sqrt{2}}+e^{-r/\sqrt{2}})}\le 
r\om(r).
\]
Noting that $r^{-1}(z_r-x_0)\in\bS^{n-1}$ which implies 
$|p|-p\cdot r^{-1}(z_{r}-x_0) \geq 0$, we can simplify further the above inequality as 
$$
-\frac{1}{2}D^{2}\phi(x_0)\frac{z_r-x_0}{r}\cdot(z_r-x_0)-\frac{(e^{r/\sqrt{2}}-e^{-r/\sqrt{2}})\phi(x_0)}{\sqrt{2}(e^{r/\sqrt{2}}+e^{-r/\sqrt{2}})}\le 
r\om(r).
$$
Dividing the above by $r>0$, and letting $r\to 0$, we get 
$$
-\frac{1}{2} \Del_\infty \phi(x_0)+\frac{d}{2} \leq 0.
$$
Therefore,
$$
-\Del_\infty\phi(x_0)+u_1(x_0)-u_2(x_0)=-\Del_\infty\phi(x_0)+d \leq 0.
$$

Consider now  the case $p=D\phi(x_0)=0$. 
Let $y:=x_0-sq$ for $q\in\bS^{n-1}$ and choose $0<s<r$. 
By performing similar computations to the above case, we get
\begin{align*}
-\frac{\phi(z_r)-\phi(x_0)}{r}+\frac{(e^{r/\sqrt{2}}-e^{-r/\sqrt{2}})\phi(x_0)}{\sqrt{2}(e^{r/\sqrt{2}}+e^{-r/\sqrt{2}})}\le r\om(r). 
\end{align*}
Thus, dividing the above by $r>0$, and letting $r\to 0$, and then taking the definition of the operator $\Del_{\infty}^{+}$ into account, we achieve that 
$-\Del_{\infty}^{+} \phi(x_0)+(u_1-u_2)(x_0)\le0$. 
\end{proof}

\section{Linearity of blow up limits}\label{sec:blowup}
We now use the generalized cones to deduce further properties of $(u_1,u_2)$.
Fix $x_0 \in U$ and choose $r>0$ satisfying $\ol{B}(x_0,r)\subset U$. 
We set 
\begin{align*}
&S_{i}^{+}(x_0,r):=\frac{M_{i}(x_0,r)-u_i(x_0)}{r}, \\
&S_{i}^{-}(x_0,r):=\frac{m_{i}(x_0,r)-u_i(x_0)}{r}
\end{align*}
for $i=1,2$, where $M_{i}(x_0,r)$ is defined in Section \ref{subsec:cone} 
and $m_{i}(x_0,r):=\min_{|y-x_0|=r}u_i(y)$. 

In the case of the single equation \eqref{eq:single}, 
as in \cite[Lemma 2.4]{CEG}, 
the associated functions $r\mapsto S^{+}(x_0,r)$ and $r\mapsto S^{-}(x_0,r)$ 
are nondecreasing and nonincreasing, respectively. 
We first point out an example to show that this is not the case for weakly coupled systems.
We thus have to be careful with the system setting here 
as the phenomena are very different with the single case.

\begin{ex}\label{ex1}
Let us consider \eqref{INF} with $U=B_1$, $g_1\equiv-1$ and $g_2\equiv1$ on $\partial B_1$. 
We can easily check that the pair of functions $(v_1,v_2)\in C^2(\ol{B}_1)$ defined by 
$$
v_1(x):=-\frac{e^{\sqrt{2}|x|}+e^{-\sqrt{2}|x|}}{e^{\sqrt{2}}+e^{-\sqrt{2}}}, \quad \text{and} \quad v_2:=-v_1
$$
is the unique solution, which is actually the classical solution. 
Then we can easily check that $M_1(0,r)$ is decreasing. 
Indeed, for any $r,s\in (0,1)$ with $s<r$,
$$
M_1(0,r)=
-\frac{e^{\sqrt{2}r}+e^{-\sqrt{2}r}}{e^{\sqrt{2}}+e^{-\sqrt{2}}}
<-\frac{e^{\sqrt{2}s}+e^{-\sqrt{2}s}}{e^{\sqrt{2}}+e^{-\sqrt{2}}}
=M_1(0,s). 
$$
Moreover,
$$
s\mapsto S_1^+(0,s)=-\frac{e^{\sqrt{2}s}+e^{-\sqrt{2}s}-2}{(e^{\sqrt{2}}+e^{-\sqrt{2}})s}
\quad \text{is decreasing.}
$$
\end{ex}
\smallskip

Let $(u_1,u_2)\in C(\cU)^{2}$ be the viscosity solution of \eqref{INF} hereinafter. 
We only have the monotonicity of $a_r$, which is the summation of $S_i^+$.
This is an easy and straightforward result coming from comparison with generalized cones. 
\begin{prop}\label{prop:increase}
Let $a_r$ be the function defined in Subsection {\rm\ref{subsec:cone}}. 
Then, $r \mapsto a_r(x_0,r)$ is nondecreasing for any $x_0\in U$.
\end{prop}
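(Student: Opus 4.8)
The plan is to exploit the one structural feature of the generalized cones \eqref{cone-system} that distinguishes the coupled problem from the single one: although $\psi_1$ and $\psi_2$ individually carry the exponential (coupling) terms $\pm(C_1 e^{\sqrt{2}|x-x_0|}+C_2 e^{-\sqrt{2}|x-x_0|})$, these cancel in the sum, so that $\psi_1+\psi_2 = 2(a|x-x_0|+b)$ is an \emph{ordinary} cone. This is exactly why one can only hope for monotonicity of the averaged quantity $a(x_0,r)=\frac{1}{2}\big(S_1^+(x_0,r)+S_2^+(x_0,r)\big)$ and not of the individual slopes $S_i^+$, as Example \ref{ex1} shows.

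First I would fix $x_0\in U$ and two radii $0<s<r$ with $\ol{B}(x_0,r)\subset U$, and consider the generalized cone $(\psi_1[x_0,r],\psi_2[x_0,r])$ built from the constants $C_1(x_0,r)$, $C_2(x_0,r)$, $a(x_0,r)$, $b(x_0)$ of Subsection \ref{subsec:cone}. As was verified in the proof of Theorem \ref{thm:cone}, these constants are chosen so that $\psi_i[x_0,r](x_0)=u_i(x_0)$ and $u_i\le\psi_i[x_0,r]$ on $\pl B(x_0,r)$, i.e. $u_i\le\psi_i[x_0,r]$ on $\pl B(x_0,r)\cup\{x_0\}$ for $i=1,2$.

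Since $(u_1,u_2)$ is a viscosity solution, it enjoys comparison with generalized cones from above by Theorem \ref{thm:cone}, and therefore $u_i\le\psi_i[x_0,r]$ on all of $\ol{B}(x_0,r)$. I would then restrict this inequality to the smaller sphere $\pl B(x_0,s)$: because $\psi_i[x_0,r]$ is radial it is constant there, equal to $\pm\big(C_1(x_0,r)e^{\sqrt{2}s}+C_2(x_0,r)e^{-\sqrt{2}s}\big)+a(x_0,r)s+b(x_0)$, so that $M_i(x_0,s)\le\psi_i[x_0,r]\big|_{\pl B(x_0,s)}$. Adding the inequalities for $i=1$ and $i=2$ annihilates the exponential terms and leaves $M_1(x_0,s)+M_2(x_0,s)\le 2a(x_0,r)\,s+(u_1(x_0)+u_2(x_0))$. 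Dividing by $2s$ and recalling the definition of $a(x_0,s)$ gives $a(x_0,s)\le a(x_0,r)$, which is the asserted monotonicity.

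The computation itself is routine once the correct cone is in place, so I do not expect any serious obstacle; the only genuine point is the cancellation noted at the outset, together with the fact that the radial symmetry of $\psi_i[x_0,r]$ makes it constant on every interior sphere, which is what allows the two sphere-maxima $M_1(x_0,s)$ and $M_2(x_0,s)$ to be controlled simultaneously despite being attained at possibly different points. No such cancellation is available for a single $M_i$, which is precisely why monotonicity holds only at the level of the sum $a(x_0,r)$.
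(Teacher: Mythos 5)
Your proof is correct and follows essentially the same route as the paper: apply comparison with generalized cones to $(\psi_1[x_0,r],\psi_2[x_0,r])$, evaluate on $\partial B(x_0,s)$ to bound $M_i(x_0,s)$, and sum the two inequalities so the exponential terms cancel, yielding $a(x_0,s)\le a(x_0,r)$. The cancellation you highlight is exactly the mechanism the paper uses.
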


\begin{proof}
Let $\psi_{i}[x_0,r]$ be generalized cones defined in Subsection \ref{subsec:cone}. 
Then we have 
$$
u_i\le\psi_{i}[x_0,r] \quad \text{in} \ B(x_0,r), \ \text{for}\ i\in\{1,2\}.
$$ 
Plug $x\in \pl B(x_0,s)$  with $s\leq r$ in the above inequality and maximize over all such $x$  to deduce that 
\begin{align}
&M_{1}(x_0,s) \leq C_{1}(x_0,r) e^{\sqrt{2}s}+C_{2}(x_0,r) e^{-\sqrt{2}s}
+a(x_0,r) s + \frac{u_1(x_0)+u_2(x_0)}{2},\label{M-ineq}\\
&M_{2}(x_0,s) \leq -C_{1}(x_0,r) e^{\sqrt{2}s}-C_{2}(x_0,r) e^{-\sqrt{2}s}
+a(x_0,r) s + \frac{u_1(x_0)+u_2(x_0)}{2}.\notag
\end{align}
Summing up the two inequalities, we immediately get the conclusion.
\end{proof}

We have the following important inequalities, which play the role as a generalization of 
\cite[Lemma 2.4]{CEG}, and are important tools to analyze the system \eqref{INF}.   
\begin{lem} \label{lem:Ll}
We have, for $x_0\in U$, $0<s\le r\le \dist(x_0,\pl U)$, and $i\in\{1,2\}$,
\begin{align*}
&SC_{i}^{+}(x_0,s) \leq 
\frac{1}{2} \left(1+\frac{\xi(s)}{\xi(r)}\right)SC_{i}^{+}(x_0,r)
+\frac{1}{2} \left(1-\frac{\xi(s)}{\xi(r)}\right)SC_{j}^{+}(x_0,r),\\
&SC_{i}^{-}(x_0,s) \geq 
\frac{1}{2} \left(1+\frac{\xi(s)}{\xi(r)}\right)SC_{i}^{-}(x_0,r)
+\frac{1}{2} \left(1-\frac{\xi(s)}{\xi(r)}\right)SC_{j}^{-}(x_0,r),
\end{align*}
where $j=3-i$, $\xi(r):=(e^{\sqrt{2}r}-e^{-\sqrt{2}r})/r$, and 
\begin{equation}\label{def:SC}
SC_i^{\pm}(x_0,r):=S_i^{\pm}(x_0,r)+\frac{u_i(x_0)-u_j(x_0)}{2}\cdot\frac{1-e^{-\sqrt{2}r}}{r}. 
\end{equation}
\end{lem}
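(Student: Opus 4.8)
The plan is to derive the whole statement from the single cone-comparison inequality \eqref{M-ineq} — the quantitative form of the subsolution property established in Theorem \ref{thm:cone} — and then to recognize the resulting bound as exactly the claimed convex combination, once the correction term in \eqref{def:SC} is inserted. Throughout I abbreviate $\chi(r):=e^{\sqrt2 r}-e^{-\sqrt2 r}$, so that $\xi(r)=\chi(r)/r$, and I write $d:=u_1(x_0)-u_2(x_0)$, $\sigma:=(u_1+u_2)(x_0)$, $D_r:=M_1(x_0,r)-M_2(x_0,r)$, and $\Sigma_r:=M_1(x_0,r)+M_2(x_0,r)$; note $u_1(x_0)=\tfrac12(\sigma+d)$.

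First I would fix $x_0$ and $0<s\le r$, and substitute the explicit coefficients $C_1(x_0,r),C_2(x_0,r),a(x_0,r),b(x_0)$ from Subsection \ref{subsec:cone} into the first line of \eqref{M-ineq}. A short computation collapses the exponential part into
\[
C_1(x_0,r)e^{\sqrt2 s}+C_2(x_0,r)e^{-\sqrt2 s}=\frac{d\,\chi(r-s)+D_r\,\chi(s)}{2\chi(r)},
\]
so that, after subtracting $u_1(x_0)$ and dividing by $s$, the bound for $M_1(x_0,s)$ turns into an upper bound for $S_1^+(x_0,s)$ whose only $s$-dependence sits in the two terms $d\,\chi(r-s)$ and $D_r\,\chi(s)$, plus the constant $\tfrac{\Sigma_r-\sigma}{2r}$ coming from the $a(x_0,r)s$ contribution.

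Next I would add the correction term $\tfrac{d}{2}\cdot\tfrac{1-e^{-\sqrt2 s}}{s}$ to pass from $S_1^+$ to $SC_1^+(x_0,s)$. The crucial point is the elementary identity
\[
\chi(r-s)-e^{-\sqrt2 s}\chi(r)=-e^{-\sqrt2 r}\chi(s),
\]
which makes every term carrying $d$ or $D_r$ fuse into $\tfrac12\,\tfrac{\xi(s)}{\xi(r)}\cdot\tfrac{D_r-d\,e^{-\sqrt2 r}}{r}$. Using the two reorganizations $\tfrac{\Sigma_r-\sigma}{2r}=\tfrac12\bigl(SC_1^+(x_0,r)+SC_2^+(x_0,r)\bigr)$ and $\tfrac{D_r-d\,e^{-\sqrt2 r}}{r}=SC_1^+(x_0,r)-SC_2^+(x_0,r)$ — both immediate from \eqref{def:SC} — the right-hand side becomes exactly $\tfrac12\bigl(1+\tfrac{\xi(s)}{\xi(r)}\bigr)SC_1^+(x_0,r)+\tfrac12\bigl(1-\tfrac{\xi(s)}{\xi(r)}\bigr)SC_2^+(x_0,r)$, which is the claim for $i=1$. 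The case $i=2$ is obtained verbatim from the second line of \eqref{M-ineq}, and the two lower inequalities for $SC_i^-$ follow by applying the upper ones to $(-u_1,-u_2)$: this pair again solves \eqref{INF} (the system is invariant under $u\mapsto -u$, since $\Del_\infty$ is odd and the zeroth-order coupling is linear), and the substitution interchanges $M_i\leftrightarrow -m_i$, hence $SC_i^+\leftrightarrow -SC_i^-$, reversing the inequality as required.

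The genuine content — and the only real obstacle — is the bookkeeping that produces the correction term in \eqref{def:SC}: the raw slopes $S_i^\pm$ do \emph{not} obey a clean monotone comparison (as Example \ref{ex1} shows), and it is only after adding the precise weight $\tfrac{u_i-u_j}{2}\cdot\tfrac{1-e^{-\sqrt2 r}}{r}$ that the exponential cross-terms telescope through the identity above. Once this correction is guessed, every remaining step is a routine manipulation of the functions $\chi$ and $\xi$.
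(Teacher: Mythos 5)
Your proposal is correct and follows essentially the same route as the paper: both start from the cone-comparison inequality \eqref{M-ineq} with the explicit coefficients $C_1,C_2,a,b$, exploit the identity $\chi(r-s)=e^{-\sqrt2 s}\chi(r)-e^{-\sqrt2 r}\chi(s)$ to absorb the exponential cross-terms into the correction in \eqref{def:SC}, and obtain the minus-inequalities by the symmetry $(u_1,u_2)\mapsto(-u_1,-u_2)$. Your reorganization via $\tfrac{\Sigma_r-\sigma}{2r}=\tfrac12(SC_1^++SC_2^+)$ and $\tfrac{D_r-de^{-\sqrt2 r}}{r}=SC_1^+-SC_2^+$ is just a cleaner bookkeeping of the computation the paper carries out with the $S_i^+$ and the residual $d$-term.
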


\begin{proof}
We only consider the case $i=1$ and prove the first inequality. 
As usual,
by subtracting a suitable constant from both $u_1$ and $u_2$, we can 
assume $u_1(x_0)=d, u_2(x_0)=0$. 
We rewrite \eqref{M-ineq} as 
\begin{align*}
\frac{M_1(x_0,s)-d}{s}
\le&\,  
\frac{d(e^{\sqrt{2}(r-s)}-e^{-\sqrt{2}(r-s)})}{2(e^{\sqrt{2}r}-e^{-\sqrt{2}r})s}
+ 
\frac{(M_1(x_0,r)-M_2(x_0,r))(e^{\sqrt{2}s}-e^{-\sqrt{2}s})}{2(e^{\sqrt{2}r}-e^{-\sqrt{2}r})s}\\
&+\frac{M_1(x_0,r)-d+M_{2}(x_0,r)}{2r}-\frac{d}{2s}\\
=&\,  
\frac{d}{2}\Big\{
\frac{e^{\sqrt{2}(r-s)}-e^{-\sqrt{2}(r-s)}}{2(e^{\sqrt{2}r}-e^{-\sqrt{2}r})s}
+\frac{\xi(s)}{r\xi(r)}-\frac{1}{2s}\Big\}\\
&+\frac{1}{2} \left(1+\frac{\xi(s)}{\xi(r)}\right)S_{1}^{+}(x_0,r) + \frac{1}{2} \left(1-\frac{\xi(s)}{\xi(r)}\right)S_{2}^{+}(x_0,r).
\end{align*}
Noting that 
\[
\frac{e^{\sqrt{2}(r-s)}-e^{-\sqrt{2}(r-s)}}{2(e^{\sqrt{2}r}-e^{-\sqrt{2}r})s}
=-\frac{e^{-\sqrt{2}r}}{2r}\cdot\frac{\xi(s)}{\xi(r)}+\frac{e^{-\sqrt{2}s}}{2s}, 
\]
we get 
\begin{align*}
S_{1}^{+}(x_0,s) 
\leq& \, 
\frac{d}{2}\left ( \frac{1-e^{-\sqrt{2}r}}{r}\cdot\frac{\xi(s)}{\xi(r)} - \frac{1-e^{-\sqrt{2}s}}{s} \right )\nonumber\\
&+\frac{1}{2} \left(1+\frac{\xi(s)}{\xi(r)}\right)S_{1}^{+}(x_0,r) + \frac{1}{2} \left(1-\frac{\xi(s)}{\xi(r)}\right)S_{2}^{+}(x_0,r),
\end{align*}
which implies the conclusion. 
\end{proof}

\begin{lem}\label{lem:Lip}
The viscosity solution $(u_1, u_2)$ of \eqref{INF} are locally Lipschitz continuous in $U$.
\end{lem}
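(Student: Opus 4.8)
The plan is to reduce local Lipschitz continuity to a uniform-in-$r$ bound on the slope functions $S_i^{\pm}(x_0,r)$ for small $r$, and to obtain such a bound by extracting a monotonicity in $r$ from Lemma~\ref{lem:Ll}. The starting point is the elementary observation that, with $\xi(r)=(e^{\sqrt2 r}-e^{-\sqrt2 r})/r$, the function $\xi$ is strictly increasing on $(0,\infty)$ (indeed $r\mapsto \sinh(\sqrt2 r)/r$ is increasing), so that for $0<s\le r$ one has $0<\xi(s)/\xi(r)\le 1$. Consequently the coefficients $\tfrac12\big(1\pm \xi(s)/\xi(r)\big)$ appearing in Lemma~\ref{lem:Ll} are nonnegative and sum to $1$; that is, the right-hand sides there are convex combinations of $SC_1^{+}(x_0,r)$ and $SC_2^{+}(x_0,r)$ (resp.\ of $SC_1^{-},SC_2^{-}$).

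First I would use this to get monotonicity of the \emph{extremal} corrected slopes. Bounding each convex combination by the maximum (resp.\ minimum) of its two entries, Lemma~\ref{lem:Ll} gives, for $0<s\le r\le \dist(x_0,\pl U)$,
\begin{equation*}
\max_{i=1,2} SC_i^{+}(x_0,s)\le \max_{i=1,2} SC_i^{+}(x_0,r),
\qquad
\min_{i=1,2} SC_i^{-}(x_0,s)\ge \min_{i=1,2} SC_i^{-}(x_0,r),
\end{equation*}
so that $r\mapsto \max_i SC_i^{+}(x_0,r)$ is nondecreasing and $r\mapsto \min_i SC_i^{-}(x_0,r)$ is nonincreasing. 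This is the exact analogue, at the level of the corrected slopes, of the monotonicity of $S^{\pm}$ used in \cite{CEG} for the scalar equation.

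Next I would fix a compact set $K\subset U$ and choose $R>0$ with $\dist(x,\pl U)\ge 2R$ for all $x\in K$. Since $u_1,u_2$ are continuous, hence bounded on the compact set $\{x:\dist(x,K)\le R\}$, the quantities $M_i(x,R),m_i(x,R),u_i(x)$ are uniformly bounded, so $|S_i^{\pm}(x,R)|\le C_0$ for all $x\in K$; moreover the correction term in \eqref{def:SC} is bounded because $\frac{1-e^{-\sqrt2 r}}{r}$ stays bounded on $(0,R]$ (it tends to $\sqrt2$ as $r\to 0$) and $|u_i-u_j|$ is bounded on $K$. Hence $\max_i SC_i^{+}(x,R)\le A$ and $\min_i SC_i^{-}(x,R)\ge -A$ uniformly in $x\in K$, and by the monotonicity above the same bounds hold for every $r\in(0,R]$. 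Subtracting the bounded correction term then recovers $S_i^{+}(x,r)\le A'$ and $S_i^{-}(x,r)\ge -A'$ for all $x\in K$, all $i$, and all $r\in(0,R]$. Finally, for $x\in K$ and $y$ with $s:=|y-x|\le R$, the definitions give
\begin{equation*}
u_i(y)-u_i(x)\le M_i(x,s)-u_i(x)=s\,S_i^{+}(x,s)\le A'|y-x|,
\end{equation*}
and symmetrically $u_i(y)-u_i(x)\ge s\,S_i^{-}(x,s)\ge -A'|y-x|$, whence $|u_i(x)-u_i(y)|\le A'|x-y|$, which is the claimed local Lipschitz estimate.

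The main obstacle is conceptual rather than computational: in contrast to the scalar case, no individual slope $S_i^{\pm}$ is monotone (see Example~\ref{ex1}), so one cannot simply dominate $S_i^{+}(x,r)$ by $S_i^{+}(x,R)$. The correction built into \eqref{def:SC} together with the passage to $\max_i$ and $\min_i$ in Lemma~\ref{lem:Ll} are exactly what restore a monotone quantity; the only remaining care is to check that all correction and coupling terms stay bounded on compact subsets, which follows from the continuity of $(u_1,u_2)$ and the elementary bounds on $\xi$ and on $\frac{1-e^{-\sqrt2 r}}{r}$.
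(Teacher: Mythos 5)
Your proposal is correct and follows essentially the same route as the paper: both deduce from Lemma \ref{lem:Ll} that the right-hand sides are convex combinations (since $\xi$ is increasing), hence $SC_i^{+}(x_0,s)\le\max_i SC_i^{+}(x_0,r)$ and $SC_i^{-}(x_0,s)\ge\min_i SC_i^{-}(x_0,r)$, which gives $r$-uniform bounds on $S_i^{\pm}(x_0,s)$ and thus the Lipschitz estimate. Your write-up merely supplies details the paper leaves implicit (the monotonicity of $\xi$, the uniformity of the constants over compact subsets).
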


\begin{proof}
In view of Lemma \ref{lem:Ll}, we have 
$$
SC_{1}^{+}(x_0,s)  \leq \max_{i=1,2} SC_{i}^{+}(x_0,r), 
\quad 
SC_{1}^{-}(x_0,s)\geq \min_{i=1,2} SC_{i}^{-}(x_0,r)
$$
for $x_0\in U$ and $0<s<r$ such that $\ol{B}(x_0,r)\subset U$, 
which implies  
\[
S_1^{+}(x_0,s)\le C^{+}(r) \quad \text{and} \quad  
S_1^{-}(x_0,s)\ge C^{-}(r) \ 
\]
for some $C^{\pm}(r)\in\R$ and any $0<s\le r$. 
Therefore, $u_1$ is Lipschitz continuous at $x_0$.  
Furthermore, one could compute more explicitly that,
\[
|Du_1(x_0)| \leq \max\left\{\max_{i=1,2}|S_{i}^+(x_0,r)|, \max_{i=1,2} |S_{i}^-(x_0,r)|\right\}+\sqrt{2}|u_1(x_0)-u_2(x_0)|.
\qedhere
\]
\end{proof}

\begin{lem}
There exists the limits of $S_{i}^{+}(x_0,r)$ as $r\to0$, which is denoted by $S_i^+(x_0)$ for $i=1,2$. 
\end{lem}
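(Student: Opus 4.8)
The plan is to reduce the existence of $\lim_{r\to0}S_i^+(x_0,r)$ to a statement about the shifted quantities $SC_i^+(x_0,r)$ of Lemma \ref{lem:Ll}, and then to extract the limits from the monotonicity structure encoded there. First I would invoke Lemma \ref{lem:Lip}: since $(u_1,u_2)$ is locally Lipschitz, $S_i^+(x_0,r)$, and hence $SC_i^+(x_0,r)$, stay bounded as $r\to0$. Because $SC_i^+(x_0,r)-S_i^+(x_0,r)=\frac{u_i(x_0)-u_j(x_0)}{2}\cdot\frac{1-e^{-\sqrt2 r}}{r}$ (with $j=3-i$) converges to the finite value $\frac{\sqrt2}{2}(u_i(x_0)-u_j(x_0))$ as $r\to0$, it suffices to prove that each $SC_i^+(x_0,r)$ has a limit.

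Next I would pass to the symmetric and antisymmetric combinations of the two inequalities in Lemma \ref{lem:Ll}. Write $\Sigma(r):=SC_1^+(x_0,r)+SC_2^+(x_0,r)$ and $B(r):=SC_1^+(x_0,r)-SC_2^+(x_0,r)$. Adding the two inequalities (the cross terms cancel since the coefficients sum to $1$) gives $\Sigma(s)\le\Sigma(r)$ for $0<s\le r$; this is exactly the monotonicity of $a_r$ from Proposition \ref{prop:increase}, and together with boundedness it yields that $\lim_{r\to0}\Sigma(r)$ exists. The difference $B$ cannot be handled by naively subtracting the two inequalities, since both point the same way. Instead I would substitute $SC_i^+=\frac12(\Sigma\pm B)$ into each inequality separately: the inequality for $i=1$ rearranges to $B(s)\le \frac{\xi(s)}{\xi(r)}B(r)+(\Sigma(r)-\Sigma(s))$, while the one for $i=2$ rearranges to $B(s)\ge \frac{\xi(s)}{\xi(r)}B(r)-(\Sigma(r)-\Sigma(s))$. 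Together these give the two-sided estimate $|B(s)-\frac{\xi(s)}{\xi(r)}B(r)|\le\Sigma(r)-\Sigma(s)$ for $0<s\le r$.

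From here I would finish with a Cauchy argument. Since $\xi(r)=(e^{\sqrt2 r}-e^{-\sqrt2 r})/r=2\sinh(\sqrt2 r)/r$ is increasing with $\xi(0^+)=2\sqrt2$, one has $\xi(s)\ge 2\sqrt2$, so dividing the estimate by $\xi(s)$ gives $|\frac{B(s)}{\xi(s)}-\frac{B(r)}{\xi(r)}|\le\frac{1}{2\sqrt2}(\Sigma(r)-\Sigma(s))$. Because $\Sigma$ already converges, its increments $\Sigma(r)-\Sigma(s)$ are arbitrarily small once $s,r$ are close to $0$, so $r\mapsto B(r)/\xi(r)$ is Cauchy as $r\to0$ and therefore converges; multiplying by $\xi(r)\to2\sqrt2$ shows $\lim_{r\to0}B(r)$ exists. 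Having both $\lim\Sigma$ and $\lim B$, the individual limits $\lim_{r\to0}SC_i^+(x_0,r)=\frac12(\lim\Sigma\pm\lim B)$ exist, and by the first reduction so do $\lim_{r\to0}S_i^+(x_0,r)=:S_i^+(x_0)$.

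The main obstacle I expect is precisely the control of the difference $B$: Lemma \ref{lem:Ll} supplies only one-sided (upper) bounds on each $SC_i^+(x_0,s)$, so the difference is not directly monotone and cannot be estimated by subtraction. The crux is to exploit both upper bounds simultaneously through the $\Sigma,B$ decomposition, borrowing the already-established convergence of the sum $\Sigma$ to convert the one-sided bounds into a genuine two-sided Cauchy estimate for $B/\xi$.
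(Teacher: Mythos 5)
Your proof is correct, and it reaches the conclusion by a genuinely different route from the paper. The paper's argument is a three-line limsup/liminf manipulation applied componentwise to Lemma \ref{lem:Ll}: fixing $r$ and letting $s\to0$ (so $\xi(s)\to2\sqrt2$) gives
\[
\limsup_{s\to0}SC_{1}^{+}(x_0,s) \leq
\tfrac{1}{2} \left(1+\tfrac{2\sqrt{2}}{\xi(r)}\right)SC_{1}^{+}(x_0,r)
+ \tfrac{1}{2} \left(1-\tfrac{2\sqrt{2}}{\xi(r)}\right)SC_{2}^{+}(x_0,r),
\]
and then letting $r\to0$ the weight on the cross term $SC_2^+(x_0,r)$ tends to zero while $SC_2^+$ stays bounded (by Lemma \ref{lem:Lip}), so the right-hand side has liminf equal to $\liminf_{r\to0}SC_1^+(x_0,r)$; hence $\limsup_{s\to0}SC_1^+\le\liminf_{r\to0}SC_1^+$ and the limit exists, with the reduction from $SC_i^+$ back to $S_i^+$ exactly as you describe. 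You instead diagonalize the $2\times2$ coupling: the sum $\Sigma$ is monotone (recovering Proposition \ref{prop:increase}) and hence convergent, while the two one-sided bounds combine into the two-sided estimate $\bigl|B(s)/\xi(s)-B(r)/\xi(r)\bigr|\le(2\sqrt2)^{-1}(\Sigma(r)-\Sigma(s))$, giving convergence of the difference by a Cauchy argument. Both rest entirely on Lemma \ref{lem:Ll} plus local Lipschitz boundedness; the paper's version is shorter, whereas yours makes the mechanism more transparent (monotone symmetric mode plus an oscillation of the antisymmetric mode controlled by the increments of the monotone one) and yields a quantitative modulus for the convergence of $B$, which the paper's soft argument does not provide.
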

\begin{proof}
By Lemma \ref{lem:Ll}, 
\[
\limsup_{s\to0}SC_{1}^{+}(x_0,s) \leq 
\frac{1}{2} \left(1+\frac{2\sqrt{2}}{\xi(r)}\right)SC_{1}^{+}(x_0,r)
+ \frac{1}{2} \left(1-\frac{2\sqrt{2}}{\xi(r)}\right)SC_{2}^{+}(x_0,r). 
\]
Then, noting that $1\pm 2\sqrt{2}/\xi(r)>0$ and taking the liminf in $r$ of the right hand side of the above, 
we get 
\[
\limsup_{s\to0}SC_{1}^{+}(x_0,s) \leq \liminf_{r\to0}SC_{1}^{+}(x_0,r),
\]
which is the conclusion. 
\end{proof}

For each $R>0$, set 
\[L_{i}(y,R):=\sup_{0<s<R}S_i^+(y,s).
\]

\begin{lem}\label{lem:S-m}
For $y\in U$, $0<\ep<r$ such that $\ol{B}(y,r)\subset U$, we have\smallskip\\
{\rm(i)} \ 
$\displaystyle 
-L_{i}(y,r)+O(r)+\frac{O(\ep)}{r}\le 
\min_{|x-y|=\ep}\frac{u_i(x)-u_i(y)}{\ep}$, \\
{\rm(ii)} \ 
$S_i^+(y)=-S_i^-(y)$. 
\end{lem}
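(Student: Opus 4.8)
The plan is to establish the lower bound (i) by comparison with generalized cones from above, anchored not at $y$ but at a minimizing point on the small sphere $\pl B(y,\ep)$, and then to obtain (ii) by combining (i) with its mirror statement and letting $\ep\to0$ followed by $r\to0$. Throughout I treat $i=1$, $j=2$; the other case is identical.

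For (i), pick $z\in\pl B(y,\ep)$ with $u_1(z)=m_1(y,\ep)$ and set $\tilde r:=r-\ep$, so that $\ol B(z,\tilde r)\subset\ol B(y,r)\subset U$ and $y\in\ol B(z,\tilde r)$ once $\ep$ is small. Since $(u_1,u_2)$ is a subsolution, Theorem \ref{thm:cone} yields comparison with generalized cones from above; applying it on $B(z,\tilde r)$ with the parameters $C_1(z,\tilde r),C_2(z,\tilde r),a(z,\tilde r),b(z)$ of Subsection \ref{subsec:cone} gives $u_1\le\psi_1[z,\tilde r]$ on $\ol B(z,\tilde r)$, hence, evaluating at $y$,
\[
u_1(y)\le C_1(z,\tilde r)\,e^{\sqrt{2}\,\ep}+C_2(z,\tilde r)\,e^{-\sqrt{2}\,\ep}+a(z,\tilde r)\,\ep+b(z).
\]
Expanding $e^{\pm\sqrt{2}\ep}=1\pm\sqrt{2}\ep+O(\ep^2)$ and using $C_1+C_2=(u_1(z)-u_2(z))/2$, $b(z)=(u_1(z)+u_2(z))/2$, the right-hand side equals $u_1(z)+\big(\sqrt{2}(C_1-C_2)+a(z,\tilde r)\big)\ep+O(\ep^2)$; dividing by $\ep$ and recalling $u_1(z)=m_1(y,\ep)$ gives
\[
-S_1^-(y,\ep)\le\sqrt{2}\big(C_1(z,\tilde r)-C_2(z,\tilde r)\big)+a(z,\tilde r)+O(\ep).
\]

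The decisive computation is that the coupling cancels to leading order. From the defining formulas one has $a(z,\tilde r)=\tfrac12\big(S_1^+(z,\tilde r)+S_2^+(z,\tilde r)\big)$ exactly, while expanding $\sinh(\sqrt{2}\tilde r)=\sqrt{2}\tilde r+O(\tilde r^3)$ and $\cosh(\sqrt{2}\tilde r)=1+O(\tilde r^2)$ in the expression for $C_1-C_2$ yields $\sqrt{2}(C_1-C_2)=\tfrac12\big(S_1^+(z,\tilde r)-S_2^+(z,\tilde r)\big)+O(\tilde r)$. Adding, the $S_2^+$--terms drop out and $\sqrt{2}(C_1-C_2)+a(z,\tilde r)=S_1^+(z,\tilde r)+O(\tilde r)$, so that $-S_1^-(y,\ep)\le S_1^+(z,\tilde r)+O(r)$. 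It remains to pass from the slope $S_1^+(z,\tilde r)$ centred at $z$ to the $y$--centred quantity $L_1(y,r)$. Every point $x\in\pl B(z,\tilde r)$ lies on a $y$--sphere of some radius $\rho=|x-y|\in[\tilde r-\ep,r]$, so $u_1(x)\le M_1(y,\rho)=u_1(y)+\rho\,S_1^+(y,\rho)$ with $S_1^+(y,\rho)\le L_1(y,r)$; together with $u_1(z)=u_1(y)+\ep\,S_1^-(y,\ep)$ and the Lipschitz bound of Lemma \ref{lem:Lip} this gives $S_1^+(z,\tilde r)\le L_1(y,r)+O(\ep)/r$, and substituting proves (i). I expect this last step to be the main obstacle: because the coupling destroys the scalar maximum principle (in Example \ref{ex1} the component $v_1$ has an interior maximum, so $\max_{\ol B(y,r)}u_1\ne M_1(y,r)$ in general), one cannot bound the shifted-sphere maximum at a single scale, and it is precisely this failure that forces $L_i(y,r)=\sup_{0<s<r}S_i^+(y,s)$ rather than $S_i^+(y,r)$ to appear.

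For (ii), I first note that $S_i^-(y):=\lim_{\ep\to0}S_i^-(y,\ep)$ exists by the argument already used for $S_i^+(y)$, now applied to the second inequality of Lemma \ref{lem:Ll}. Running the mirror of (i) --- comparison with generalized cones from below anchored at a maximizing point of $\pl B(y,\ep)$ --- produces the companion estimate $S_i^+(y,\ep)\le-\inf_{0<s<r}S_i^-(y,s)+O(r)+O(\ep)/r$. Now let $\ep\to0$ and then $r\to0$. In (i) the errors $O(r)$ and $O(\ep)/r$ vanish in this order and $L_i(y,r)\to S_i^+(y)$, giving $S_i^-(y)\ge-S_i^+(y)$; in the companion estimate $\inf_{0<s<r}S_i^-(y,s)\to S_i^-(y)$ and $S_i^+(y,\ep)\to S_i^+(y)$, giving $S_i^+(y)\le-S_i^-(y)$. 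The two inequalities combine to $S_i^+(y)=-S_i^-(y)$, which is (ii).
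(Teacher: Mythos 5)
Your proof is correct and follows essentially the same route as the paper: anchor a generalized cone at a point of $\pl B(y,\ep)$, evaluate it at $y$, Taylor-expand $e^{\pm\sqrt{2}\ep}$ so that the coupling contributions from $S_2^+$ cancel to leading order, and absorb the remaining $x$-centred slope into $L_1(y,r)$ up to $O(r)+O(\ep)/r$; part (ii) then follows by the same limiting procedure, your ``mirror'' estimate being exactly the paper's substitution $(u_1,u_2)\mapsto(-u_1,-u_2)$. The only (cosmetic) differences are that you use radius $r-\ep$ so the cone stays inside $\ol{B}(y,r)$ and you convert $S_1^+(z,\tilde r)$ to the $y$-centred quantity $L_1(y,r)$ explicitly, which is in fact slightly more careful than the paper's ending at $-L_1(x,r)$.
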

\begin{proof}
Suppose that $y=0$. Pick $0<\ep<r<R$ such that $R>r+\ep$ and $\ol{B}(0,R)\subset U$.

In light of comparison with generalized cones, if $0<\ep:=|x|<r$, then 
\begin{align*}
&u_{1}(0) \leq C_{1}(x,r) e^{\sqrt{2}\ep}+C_{2}(x,r) e^{-\sqrt{2}\ep}+a(x,r) \ep + \frac{u_1(x)+u_2(x)}{2},
\end{align*}
where the functions $C_{i}, a$ are give in Section \ref{subsec:cone}.

Using $e^{\pm\sqrt{2}\ep}=1\pm\sqrt{2}\ep+O(\ep^2)$, we get 
\begin{align*}
&C_{1}(x,r) e^{\sqrt{2}\ep}+C_{2}(x,r) e^{-\sqrt{2}\ep}\\
\le&\, 
\frac{u_1(x)-u_2(x)}{2}
+
\Big[
-\frac{\sqrt{2}(e^{\sqrt{2}r}+e^{-\sqrt{2}r})(u_1(x)-u_2(x))}{2(e^{\sqrt{2}r}-e^{-\sqrt{2}r})}
+\frac{\sqrt{2}(M_{1}(x,r)-M_{2}(x,r))}{e^{\sqrt{2}r}-e^{-\sqrt{2}r}}
\Big]\cdot\ep\\
&+O(\ep^2). 
\end{align*}
Note that
\[
M_{1}(x,r)\le u_1(x)+L_{1}(x,r)r
\le u_1(0)+L_1(0,\ep)\ep+L_{1}(x,r)r. 
\]  

We combine all the inequalities above to yield that
\begin{align*}
&\frac{u_1(x)-u_1(0)}{\ep}\cdot\frac{2r-\ep}{2r}+O(\ep)\\
\ge&\, 
\Big[
\frac{\sqrt{2}(e^{\sqrt{2}r}+e^{-\sqrt{2}r})(u_1(x)-u_2(x))}{2(e^{\sqrt{2}r}-e^{-\sqrt{2}r})}
-\frac{\sqrt{2}(M_1(x,r)-M_{2}(x,r))}{e^{\sqrt{2}r}-e^{-\sqrt{2}r}}
\Big]\\
&-\frac{L_{1}(0,\ep)\ep+L_1(x,r)r+M_{2}(x,r)-u_2(x)}{2r}\\
=&\,
-\frac{M_1(x,r)-u_1(x)}{2r}-\frac{L_{1}(x,r)}{2}+\frac{O(\ep)}{r}+O(r)\\
\ge&\,
-L_{1}(x,r)+\frac{O(\ep)}{r}+O(r), 
\end{align*}
which implies the conclusion of (i).

Send $\ep \to 0$ and $r\to 0$ in this order to deduce that
$$
S_i^-(0) \geq - S_i^+(0).
$$
Changing the roles of $(u_1,u_2)$ to $(-u_1,-u_2)$ to yield the result.
\end{proof}

\begin{rem}\label{rem:ball-sphere}
Recall that our situation here is very different compared to the single equation case. 
As seen in Example \ref{ex1}, 
maximum and minimum values on the spheres of solutions to \eqref{INF}
are widely different from those on the balls, which is not the case in the single equation. 
See \cite[Lemma 4.1 (a)]{C}. 
Therefore, we cannot expect for instance that 
a subsolution has a nondecreasing property of $S^{+}(y,\cdot)$ 
defined in \cite{C}. See \cite[Lemma 4.1 (d)]{C}. 
Moreover, $r\mapsto \max_{\ol{B}_r(0)}u_1$ is concave which is critically  different from \cite[Lemma 4.1 (e)]{C}.
Of course, this is not the case for single equation
as $u_{+}(x)=|x|$ is a subsolution but $u_{-}(x)=-|x|$ is not a subsolution.

However, we are able to achieve a sort of \textit{monotonicity} result in Lemma \ref{lem:Ll} for $SC_i^{\pm}$ given by \eqref{def:SC} in a coupled way. One could think of $2^{-1}\left(1\pm \xi(s)/\xi(r)\right)$ as the weights corresponding with the coupling terms. This is a sort of intuitive phenomenon appearing in the weakly coupled system context. See \cite{MT2, MT3} for similar phenomenon regarding the weights in the study of weakly coupled system of Hamilton--Jacobi equations. 
\end{rem}
\smallskip

Now, take $x_0 \in U$ and $R>0$ such that $\ol{B}(x_0,R) \subset U$.
For each $r>0$ sufficiently small, set
$$
v_{i}^{r}(x):=\frac{u_i(x_0+rx)-u_i(x_0)}{r}, \quad \text{for}\ |x| \leq \frac{R}{r}, \ i=1,2.
$$
Clearly $\{v_{i}^{r}\}$ is precompact in $C(B(0,R))$.
Thus for any sequence $\{r_j\}_{j\in\N}$ with $r_j \to 0$ as $j\to\infty$, 
we can pass to a subsequence if necessary and get 
$v_{i}^{r_j} \to v_i\in \Lip(\R^n)$ locally uniformly in $\R^n$ as $j\to\infty$. 
We call $v_i$ a blow-up limit of $u_i$. 
We now prove that all of blow-up limits $v_i$ are affine. 
Notice that $(v_1,v_2)$ here really depends on the subsequence we take. In general, a pair $(v_1,v_2)$ of blow-up limits depends on the choice of subsequences and it might not be unique. 

Let us recall the literature on regularity results for the single infinity Laplace equation here. 
Note first that in all of these papers, 
the result on affine blow-up limits  \cite{CEG} plays an important role.
Savin \cite{Sa} showed that this blow-up limit is unique and achieved $C^1$ regularity for solutions in  case $n=2$. Evans and Savin \cite{ESa} then established $C^{1,\al}$ regularity for solutions in this setting. The proofs in \cite{Sa, ESa} depend highly on the geometry of the $2$-dimensional space and cannot be extended to the case with $n\geq 3$.
Recently, Evans and Smart \cite{ESm} used the nonlinear adjoint method to prove that this blow-up limit is unique, which yields the differentiability everywhere of solutions for all $n\geq 2$. The questions on $C^1$ and $C^{1,\al}$ regularity, however, are still open for $n\geq 3$.

\begin{lem}\label{lem:sameLip}
We have 
\[
\|Dv_i\|_{L^\infty(\R^n)}=S_i^+(x_0)=-S_i^-(x_0) \quad \text{for any} \ x_0\in\R^n, \ 
\text{and} \ i=1,2.
\]
\end{lem}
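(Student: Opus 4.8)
The plan is to reduce everything to the single slope $L:=S_i^+(x_0)$, to show that every blow-up limit $v_i$ grows exactly linearly on spheres centered at the origin with slope $L$, and then to upgrade this radial information to a global Lipschitz bound. First I would record that $L\ge 0$ and pin down the radial profile of $v_i$. Since $m_i(x_0,r)\le M_i(x_0,r)$ we have $S_i^-(x_0,r)\le S_i^+(x_0,r)$, so in the limit $S_i^-(x_0)\le S_i^+(x_0)$; combined with $S_i^+(x_0)=-S_i^-(x_0)$ from Lemma \ref{lem:S-m}(ii) this forces $L=S_i^+(x_0)\ge 0$. Next, for a fixed $\rho>0$ and the subsequence $r_j\to0$ defining $(v_1,v_2)$, the rescaling gives
\[
\max_{|x|=\rho}v_i^{r_j}(x)=\rho\,\frac{M_i(x_0,r_j\rho)-u_i(x_0)}{r_j\rho}=\rho\,S_i^+(x_0,r_j\rho)\longrightarrow \rho\,S_i^+(x_0)=L\rho ,
\]
and likewise $\min_{|x|=\rho}v_i^{r_j}\to\rho\,S_i^-(x_0)=-L\rho$. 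As $v_i^{r_j}\to v_i$ locally uniformly, I conclude $\max_{|x|=\rho}v_i=L\rho$ and $\min_{|x|=\rho}v_i=-L\rho$ for every $\rho>0$. Taking a point $x_\rho$ on $\{|x|=\rho\}$ at which the maximum is attained already yields the easy half $\|Dv_i\|_{L^\infty(\R^n)}\ge (v_i(x_\rho)-v_i(0))/|x_\rho|=L$.

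For the reverse inequality I would first observe that each $v_i$ is $\infty$-harmonic on $\R^n$. Rescaling \eqref{INF} shows that $v_i^{r}$ solves, in the viscosity sense, $-\Del_\infty v_i^{r}+r^2(v_i^{r}-v_j^{r})=-r\,(u_i-u_j)(x_0)$; since the zeroth-order term and the right-hand side vanish locally uniformly as $r\to0$, stability of viscosity solutions gives $-\Del_\infty v_i=0$ in $\R^n$. Thus each $v_i$ enjoys comparison with the ordinary cones, and the single-equation monotonicity of \cite{CEG} (see also \cite[Lemma 4.1]{C}) applies: $s\mapsto \widetilde S_i^+(z,s):=(\max_{|x-z|=s}v_i-v_i(z))/s$ is nondecreasing for every $z\in\R^n$. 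Comparison with cones from above and from below, together with the radial profile just computed, gives the two-sided bound $-L|x|\le v_i(x)\le L|x|$ for all $x$.

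The main point is then to propagate this into a uniform slope bound away from the center. Fixing $z\in\R^n$ and $s'>0$, and using $v_i(x)\le L|x|\le L(|z|+s')$ on $\{|x-z|=s'\}$ together with $v_i(z)\ge -L|z|$, I estimate
\[
\widetilde S_i^+(z,s')=\frac{\max_{|x-z|=s'}v_i-v_i(z)}{s'}\le \frac{L(|z|+s')-(-L|z|)}{s'}=L+\frac{2L|z|}{s'} .
\]
Since $\widetilde S_i^+(z,\cdot)$ is nondecreasing, letting $s'\to\infty$ shows $\widetilde S_i^+(z,s)\le L$ for every $s>0$ and every $z$; hence $|Dv_i(z)|=\lim_{s\to0}\widetilde S_i^+(z,s)\le L$ at every point of differentiability. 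As $v_i$ is locally Lipschitz it is differentiable a.e., so $\|Dv_i\|_{L^\infty(\R^n)}\le L$. Combining this with the lower bound and Lemma \ref{lem:S-m}(ii) gives $\|Dv_i\|_{L^\infty(\R^n)}=S_i^+(x_0)=-S_i^-(x_0)$. I expect the most delicate step to be the passage from the equation for $u_i$ to the $\infty$-harmonicity of $v_i$ --- verifying that the coupling survives only as a negligible perturbation under viscosity stability --- followed by the careful use of the monotonicity of $\widetilde S_i^+$ at points $z\neq0$, rather than only at the blow-up center.
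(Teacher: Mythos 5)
Your proposal is correct, but it takes a genuinely different route from the paper. The paper never decouples the system: it applies the coupled monotonicity inequality of Lemma \ref{lem:Ll} at the shifted base points $rx$ with outer radius $r_0$, estimates $SC_i^+(rx,r_0)$ in terms of $S_i^+(0,r_0)$ plus explicit error terms, and then sends $r\to0$ followed by $r_0\to0$ to bound all difference quotients of $v_1$ directly by $S_1^+(0)$. You instead pass to the blow-up limit first, observe that the rescaled pair solves $-\Del_\infty v_i^{r}+r^2(v_i^{r}-v_j^{r})=-r(u_i-u_j)(x_0)$ so that the coupling disappears in the limit and each $v_i$ is $\infty$-harmonic on $\R^n$, and then import the classical single-equation machinery of \cite{CEG, C} (comparison with cones and monotonicity of $s\mapsto \widetilde S_i^+(z,s)$ at \emph{every} center $z$, not just the blow-up center) to convert the radial profile $\max_{|x|=\rho}v_i=L\rho$, $\min_{|x|=\rho}v_i=-L\rho$ into the global bound $\|Dv_i\|_{\Li(\R^n)}\le L$. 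The steps you flag as delicate are indeed the ones to check, and they go through: the perturbation terms vanish locally uniformly by the local Lipschitz bounds of Lemma \ref{lem:Lip}, and viscosity stability for the normalized operator (with the $\Del_\infty^{\pm}$ convention at critical points of test functions) is standard; you also implicitly use that $\lim_{r\to0}S_i^-(x_0,r)$ exists, which follows from the paper's existence lemma for $S_i^+$ applied to $(-u_1,-u_2)$. What each approach buys: the paper's argument is self-contained within the system framework and quantitative in $r_0$ (inequality \eqref{eq:SLfin}), keeping the coupled weights visible; yours is shorter and more conceptual, reduces the problem to known single-equation facts, and as a bonus makes Theorem \ref{thm:blow-up} an immediate corollary of \cite{CEG} once $\infty$-harmonicity of the blow-ups is established --- at the cost of relying on that literature as a black box rather than on the new ``generalized cone'' machinery the paper is developing.
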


\begin{proof}
Assume $x_0=0$ for simplicity. It is enough to show that, for every $R_0>0$,
\begin{equation}\label{eq:sL}
A:=
\limsup_{r \to 0} \sup_{|x|,\,|y|\leq R_0}\frac{v_1^{r}(y)-v_1^{r}(x)}{|y-x|}
=\limsup_{r \to 0} \sup_{|x|,\,|y|\leq R_0}\frac{u_1(ry)-u_1(rx)}{r|y-x|} 
\leq S_1^+(0).
\end{equation}
Fix $r_0>0$. For $r>0$ sufficiently small, we have $rR_0<r_0/2$, and thus $rx,ry \in B_{r_0/2}$ for all $x,y \in B_{R_0}$.

In light of Lemma \ref{lem:Ll}, we have 
\begin{align}
&SC_1^+(rx,r|x-y|)\nonumber\\ 
\leq&\, \frac{1}{2}\left(1+\frac{\xi(r|x-y|)}{\xi(r_0)}\right)SC_1^+(rx,r_0)+ \frac{1}{2}\left(1-\frac{\xi(r|x-y|)}{\xi(r_0)}\right)SC_2^+(rx,r_0). 
\label{eq:SLkey}
\end{align}
We use the definition of $SC_1^+$ to estimate first the left hand side of the above
$$
SC_1^+(rx,r|x-y|)\geq \frac{u_1(ry)-u_1(rx)}{r|x-y|}+\frac{u_1(rx)-u_2(rx)}{2}\frac{1-e^{-\sqrt{2}r|x-y|}}{r|x-y|}.
$$
On the other hand, noting that 
\[
\max_{|z-rx|=r_0}\frac{u_1(z)-u_1(0)}{r_0}
\le 
\begin{cases}
\displaystyle
\frac{r_0+r|x|}{r_0}\sup_{r_0-r|x|\leq s \leq r_0+r|x|}S_1^+(0,s)
&\text{if}\ S_1^+(0,s)\ge0\\
\displaystyle
\frac{r_0-r|x|}{r_0}\sup_{r_0-r|x|\leq s \leq r_0+r|x|}S_1^+(0,s)
&\text{if}\ S_1^+(0,s)<0, 
\end{cases}
\]
we get 
\begin{align*}
SC_1^+(rx,r_0)
=&\max_{|z-rx|=r_0}\frac{u_1(z)-u_1(rx)}{r_0}+\frac{u_1(rx)-u_2(rx)}{2}\frac{1-e^{-\sqrt{2}r_0}}{r_0}\\
=&\max_{|z-rx|=r_0}\frac{u_1(z)-u_1(0)}{r_0}+\frac{u_1(0)-u_1(rx)}{r_0}+\frac{u_1(rx)-u_2(rx)}{2}\cdot\frac{1-e^{-\sqrt{2}r_0}}{r_0}\\
\leq &\,
S_{1}^{+}(0,r_0)+\om(r)+\frac{u_1(0)-u_1(rx)}{r_0}+\frac{u_1(rx)-u_2(rx)}{2}\cdot\frac{1-e^{-\sqrt{2}r_0}}{r_0}
\end{align*}
for a modulus $\om\in C([0,\infty))$ with $\om(0)=0$. 
We can achieve similar estimate for $SC_2^+(rx,r_0)$.
We let $r \to 0$ in \eqref{eq:SLkey} and use the above estimates to yield that
\begin{multline}\label{eq:SLfin}
A+\frac{u_1(0)-u_2(0)}{2}\sqrt{2} 
\leq 
\frac{1}{2}\left(1+\frac{2\sqrt{2}}{\xi(r_0)}\right)\cdot\left(S_1^+(0,r_0)
+\frac{u_1(0)-u_2(0)}{2}\cdot\frac{1-e^{-\sqrt{2}r_0}}{r_0}\right)\\
+\frac{1}{2}\left(1-\frac{2\sqrt{2}}{\xi(r_0)}\right)\cdot\left(S_2^+(0,r_0)+\frac{u_2(0)-u_1(0)}{2}\cdot\frac{1-e^{-\sqrt{2}r_0}}{r_0}\right).
\end{multline}
Letting $r_0\to 0$ in \eqref{eq:SLfin} to get the desired result.
\end{proof}

We are now ready to prove one of the main results.
\begin{thm}\label{thm:blow-up}
All of blow up limits of $u_i$ are affine for $i=1,2$. 
\end{thm}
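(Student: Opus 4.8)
The plan is to fix $x_0 \in U$ and an arbitrary blow-up limit $v_i$ (extracted along some sequence $r_j \to 0$), and to show directly that $v_i$ is affine, namely $v_i(x) = L_i\langle e_i, x\rangle$ for a constant $L_i \ge 0$ and a unit vector $e_i$; note $v_i(0)=0$ since $v_i^r(0)=0$, so affine here means linear. The two ingredients I would lean on are both already in hand: the global Lipschitz bound together with the identity $\|Dv_i\|_{L^\infty(\R^n)} = S_i^+(x_0) = -S_i^-(x_0) =: L_i$ from Lemma \ref{lem:sameLip}, and the existence of the one-sided limit $S_i^+(x_0) = \lim_{s\to 0} S_i^+(x_0,s)$. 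If $L_i = 0$ then $v_i \equiv 0$ is trivially affine, so I assume $L_i > 0$.

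First I would pin down the extrema of $v_i$ on spheres. Since $v_i^{r_j} \to v_i$ locally uniformly and $\partial B(0,r)$ is compact, for each fixed $r > 0$,
\[
\max_{|x|=r} v_i(x) = \lim_{j\to\infty}\max_{|x|=r}\frac{u_i(x_0+r_jx)-u_i(x_0)}{r_j} = r\lim_{j\to\infty} S_i^+(x_0, r_j r) = r\, S_i^+(x_0) = L_i r,
\]
where I used $\max_{|x|=r} u_i(x_0+r_j x) = M_i(x_0, r_j r)$ and $r_j r \to 0$. The same computation with minima gives $\min_{|x|=r} v_i(x) = r\, S_i^-(x_0) = -L_i r$ for every $r>0$.

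Next comes the squeeze. Because $\|Dv_i\|_{L^\infty(\R^n)} = L_i$, the limit $v_i$ is globally $L_i$-Lipschitz with $v_i(0)=0$. For each $R>0$ pick a maximizer $z_R$ and a minimizer $w_R$ on $\partial B(0,R)$, so $v_i(z_R)=L_iR$ and $v_i(w_R)=-L_iR$. The Lipschitz estimate $v_i(x)\ge v_i(z_R)-L_i|x-z_R|$ together with the expansion $|x-z_R| = R - \langle x, z_R/R\rangle + O(R^{-1})$ yields, after passing to a subsequence with $z_R/R \to e$,
\[
v_i(x) \ge L_i\langle e, x\rangle \quad \text{for all } x \in \R^n.
\]
Symmetrically, from $v_i(x)\le v_i(w_R)+L_i|x-w_R|$ and $w_R/R \to f$ I get $v_i(x) \le -L_i\langle f, x\rangle$. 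Evaluating both bounds at $x=e$ forces $v_i(e)=L_i$ and $\langle f, e\rangle = -1$, hence $f=-e$; substituting back collapses the two inequalities to $v_i(x)=L_i\langle e, x\rangle$, which is affine.

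I expect the genuinely hard part to lie not in this theorem but in the lemmas it rests on: the coupled monotonicity of $SC_i^\pm$ in Lemma \ref{lem:Ll} and its consequence Lemma \ref{lem:sameLip}, which must replace the plain monotonicity of $S^\pm$ used in \cite{CEG} (and which fails here, as Example \ref{ex1} shows). Granting those, the linearity argument above is essentially metric and does not even require re-proving that each $v_i$ is infinity harmonic; the only delicate points are the interchange of limits in the sphere-extrema computation, handled by local uniform convergence, and the subsequence extraction for the directions $e$ and $f$.
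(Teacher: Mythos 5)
Your proof is correct, and while it rests on exactly the same pillars as the paper's argument (Lemma \ref{lem:sameLip}, hence Lemma \ref{lem:Ll}, plus the identification $\max_{|x|=r}v_i = rS_i^+(x_0)$ and $\min_{|x|=r}v_i = rS_i^-(x_0)$ via local uniform convergence), the final ``metric'' step is genuinely different. The paper picks, for each $t>0$, maximizers $tx_t^+$ and minimizers $tx_t^-$ on $\pl B(0,t)$, uses $2t\|Dv_1\|_{\Li} = v_1(tx_t^+)-v_1(tx_t^-) \le t\|Dv_1\|_{\Li}|x_t^+-x_t^-|$ to force antipodality $x_t^+=-x_t^-$ and linearity of $v_1$ on each diameter, sends $t\to\infty$ to produce a full line of maximal slope, and then invokes a separate rigidity lemma (Lemma \ref{lem:crandall}, i.e.\ \cite[Lemma 7.3]{C}) to upgrade linearity on a line to linearity on all of $\R^n$. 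You instead run a cone-squeeze: the $L_i$-Lipschitz bound anchored at a far-away maximizer $z_R$ gives $v_i(x)\ge L_i\langle z_R/R,x\rangle + O(R^{-1})$, the one at a minimizer $w_R$ gives the matching upper bound, and evaluating at the limit direction $e$ forces $f=-e$ and collapses the sandwich. This buys you a shorter, self-contained argument that bypasses Lemma \ref{lem:crandall} entirely and avoids the (slightly delicate, though correct) double limit $j\to\infty$ then $t\to\infty$ with the extraction of $x_t^+\to p$. The only points to state carefully in a write-up are the ones you already flag: the interchange of $\max$ and limit over the compact sphere, the fact that the same subsequence $R_k\to\infty$ is used for all $x$ when passing $z_R/R\to e$, and that the limit of $\min_{|x|=r}v_i^{r_j}$ requires the existence of $\lim_{s\to0}S_i^-(x_0,s)$, which follows from the stated lemma applied to $(-u_1,-u_2)$ together with Lemma \ref{lem:S-m}(ii). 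No gap.
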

We give the proof which is based on \cite[Proposition 7.1]{C} here for confirmation.  
We also give a simple proof of \cite[Lemma 7.3]{C}. 
\begin{lem}[{\rm\cite[Lemma 7.3]{C}}]\label{lem:crandall}
Let $v$ be any Lipschitz continuous function on $\R^n$ such that the Lipschitz constant is $1$.
Assume further that there exist $x_0\in \R^n$ and $p \in \bS^{n-1}$ such that
\[
v(x_0+tp)=v(x_0)+t \quad \text{for all} \ t\in\R. 
\]
Then $v$ is linear, and moreover 
\[
v(x_0+x)=v(x_0)+p\cdot x \quad \text{for all} \ x\in\R^n.  
\]
\end{lem}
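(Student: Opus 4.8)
The statement to prove is Lemma~\ref{lem:crandall}: a $1$-Lipschitz function $v$ on $\R^n$ that grows at the maximal rate along a full line through $x_0$ in direction $p$ must be the affine function $v(x_0+x)=v(x_0)+p\cdot x$. Note this is a purely elementary fact about Lipschitz functions and has nothing to do with the infinity Laplacian per se; it is the geometric engine behind Theorem~\ref{thm:blow-up}.

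The plan is to reduce everything to the equality case of the triangle-type inequality forced by the $1$-Lipschitz bound. After translating so that $x_0=0$ and $v(0)=0$, I would fix an arbitrary point $x\in\R^n$ and two arbitrary parameters $s,t\in\R$, and compare $v(x)$ against the two points $x_0+sp=sp$ and $x_0+tp=tp$ on the special line, where the values are known exactly: $v(sp)=s$ and $v(tp)=t$. The $1$-Lipschitz condition gives the two-sided bounds
\begin{equation*}
|v(x)-s|\le |x-sp|\qquad\text{and}\qquad |v(x)-t|\le |x-tp|.
\end{equation*}
Adding these, and using $v(sp)-v(tp)=s-t$, I would try to sandwich $v(x)$: choosing $s$ very large positive and $t$ very large negative should pin $v(x)$ down. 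Concretely, $s-v(x)\le |x-sp|$ and $v(x)-t\le|x-tp|$ give
\begin{equation*}
s-t\le |x-sp|+|x-tp|.
\end{equation*}
The idea is that as $s\to+\infty$ and $t\to-\infty$ the right side is asymptotically $(s-t)+2\,(-p\cdot x)+o(1)$ by expanding $|x-sp|=|s|\sqrt{1-2(p\cdot x)/s+|x|^2/s^2}= s - p\cdot x + O(1/s)$ for $s>0$, and similarly $|x-tp|=-t+p\cdot x+O(1/|t|)$ for $t<0$. This forces an inequality between $v(x)$ and $p\cdot x$ in one direction; the reverse direction follows symmetrically.

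More cleanly, I would run the two one-sided estimates separately. From $s-v(x)\le|x-sp|=s-p\cdot x+O(1/s)$ I get $v(x)\ge p\cdot x - O(1/s)$, and letting $s\to+\infty$ yields $v(x)\ge p\cdot x$. From $v(x)-t\le|x-tp|=-t+p\cdot x+O(1/|t|)$ I get $v(x)\le p\cdot x+O(1/|t|)$, and letting $t\to-\infty$ yields $v(x)\le p\cdot x$. Combining the two gives $v(x)=p\cdot x$ for every $x$, which is exactly the claim $v(x_0+x)=v(x_0)+p\cdot x$ after undoing the normalization. Linearity is then immediate.

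The only delicate point is the asymptotic expansion of $|x-sp|$ for large $|s|$, which uses that $p$ is a \emph{unit} vector so that the leading term is exactly $|s|$; this is where the hypothesis $p\in\bS^{n-1}$ is essential and where a careless estimate could go wrong. Everything else is a direct application of the Lipschitz bound, so I do not anticipate a genuine obstacle — the main care is bookkeeping the signs of $s$ and $t$ and confirming the error terms vanish in the limit.
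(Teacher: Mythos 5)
Your proof is correct and follows essentially the same route as the paper's: both arguments pin down $v(x)$ by comparing against points $sp$ on the distinguished line with $s\to+\infty$ and $t\to-\infty$ and exploiting the asymptotics of the Euclidean distance there. The only cosmetic difference is that the paper first translates along $p$ and tests points in the hyperplane orthogonal to $p$, so the cross term $p\cdot x$ vanishes and the expansion reduces to squaring an inequality, whereas you carry $p\cdot x$ explicitly through the expansion $|x-sp|=s-p\cdot x+O(1/s)$; both versions are valid.
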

\begin{proof}
We may assume $x_0=0$, $v(0)=0$ and $p=(1,0,\dots,0)$ without loss of generality. 
Setting $w(x_1,\dots,x_n):=v(x_1+a,\dots,x_n)-a$ for a fixed $a\in\R$, 
we only need to prove $w(0,x_2,\dots,x_n)=0$.  

By the assumption of $v$ in the statement, we have 
\[
t-w(0,x_2,\dots,x_n)
\leq
|w(0,x_2,\dots,x_n)-w(t,0,\dots,0)|\le\sqrt{t^2+x_2^2+\cdots+x_n^2} 
\]
for all $t\in\R$, which implies $w(0,x_2,\dots,x_n)\ge0$. 
Suppose $c:=w(0,x_2,\dots,x_n)>0$. 
Raising the last inequality of the above to the second power, we get 
$$
-2ct+c^2\le x_2^2+\cdots+x_n^2,
$$
 which is a contradiction 
for $t<-1$ with $|t|$ large. 
\end{proof}

\begin{proof}[Proof of Theorem {\rm\ref{thm:blow-up}}]
We may assume $x_0=0$. 
Let $v_1\in\Lip(\R^n)$ be a blow-up limit of $u_1$, and then we can take 
a sequence $r_j\to0$ as $j\to\infty$ such that 
$v_1^{r_j}\to v_1$ as $j\to\infty$. 

Fix $t>0$ and choose $x_j\in\pl B(0,1)$ so that 
\begin{align*}
&v_1^{r_j}(tx_j):=
\max_{|x|=t}v_1^{r_j}(x)\\
=&\,
\frac{\max_{|x|=t}u_1(r_jx)-u_1(0)}{r_j}
=
t\cdot\frac{\max_{|x|=1}u_1(tr_jx)-u_1(0)}{tr_j}
=
tS_{1}^{+}(0,r_j).  
\end{align*}
We may assume $x_j\to x_{t}^{+}\in\pl B(0,1)$ as $j\to\infty$ 
by replacing it by a subsequence if necessary. 
In view of Lemma \ref{lem:sameLip}, sending $j\to\infty$ yields 
\[
v_1(tx_{t}^{+})=t\|Dv_1\|_{\Li(\R^n)}. 
\]
Similarly, 
\[
v_1(tx_{t}^{-})=-t\|Dv_1\|_{\Li(\R^n)} \quad\text{for some} \ 
x_{t}^{-}\in\pl B(0,1).  
\]
Therefore, 
\[
2t\|Dv_1\|_{\Li(\R^n)}=v_1(tx_{t}^{+})-v_1(tx_{t}^{-})
\le t\|Dv_1\|_{\Li(\R^n)}|x_{t}^{+}-x_{t}^{-}|. 
\]
We only need to consider the case $\|Dv_1\|_{\Li(\R^n)}\not=0$, 
since if not the case, then $v_1\equiv0$ and we are done. 
If $\|Dv_1\|_{\Li(\R^n)}\not=0$, then we have $x_{t}^{+}=-x_{t}^{-}$, 
which implies $v_1$ is linear on the segment $[tx_{t}^{-},tx_{t}^{+}]$, 
\[
\frac{v(tx_{t}^{+})-v(tx_{t}^{-})}{|tx_{t}^{+}-tx_{t}^{-}|}
=\|Dv_1\|_{\Li(\R^n)}. 
\]  
We may assume again $x_{t}^{+}\to p$ as $t\to\infty$ for some 
$p\in \bS^{n-1}$ by taking a subsequence.  
Thus, we get 
\[
v_1(tp)=t\|Dv_1\|_{\Li(\R^n)} \quad\text{for all} \ t\in\R. 
\]
In view of Lemma \ref{lem:crandall}, we get the conclusion. 
\end{proof}

\section{Further interpretation}\label{sec:interpretation}
We notice that by multiplying 
the two equations of \eqref{INF} with $|Du_i|^2$ for $i=1,2$ respectively, 
we can rewrite it as
\begin{equation}\label{INF-re}
\begin{cases}
-\sum_{i,j}(u_1)_{x_i} (u_1)_{x_j} (u_1)_{x_i x_j}+|Du_1|^2(u_1-u_2)=0 \qquad &\text{in}\ U\\
-\sum_{i,j}(u_2)_{x_i} (u_2)_{x_j} (u_2)_{x_i x_j} +|Du_2|^2(u_2-u_1)=0 \qquad &\text{in} \ U
\end{cases}
\end{equation}
We get from the first equation that $Du_1\cdot \left(D^2 u_1 D u_1-(u_1-u_2)Du_1\right)=0$. This means that the vector $Du_1$ is perpendicular to $D^2 u_1 Du_1-(u_1-u_2)Du_1$. It is also important to note that, the ``generalized cones'' $(\psi_1,\psi_2)$ defined in Section \ref{sec:cone} 
satisfy,
$$
D^2\psi_1 D\psi_1 - (\psi_1-\psi_2)D\psi_1=D^2\psi_2 D\psi_2 - (\psi_2-\psi_1)D\psi_2=0 \quad \text{in} \ \R^n\setminus\{x_0\}.
$$ 
It is clear that when $u_1=u_2$ then we are back to the case of single equation. However, when $u_1\neq u_2$, then the coupling  terms do play an important role in this relation and should be taken into account in the analysis.

We now provide some further heuristic interpretation following \cite{ESm}.
Let us now consider the formal linearized operator 
$(L_1[v_1,v_2],L_2[v_1,v_2])$ of \eqref{INF-re} which is given by
$$
\begin{cases}
L_1[v_1,v_2]:=-\sum_{i,j}(u_1)_{x_i} (u_1)_{x_j} (v_1)_{x_i x_j}- 2(D^2 u_1 Du_1-(u_1-u_2)Du_1)\cdot Dv_1\\
\hspace*{2.2cm}+|Du_1|^2(v_1-v_2),\\
L_2[v_1,v_2]:=-\sum_{i,j}(u_2)_{x_i} (u_2)_{x_j} (v_2)_{x_i x_j}- 2(D^2 u_2 Du_2-(u_2-u_1)Du_2)\cdot Dv_2\\
\hspace*{2.2cm}+|Du_2|^2(v_2-v_1).
\end{cases}
$$
We can interpret this linearized system as a weakly coupled system of linear parabolic equations. Firstly, $(u_1)_{x_i} (u_1)_{x_j} (v_1)_{x_i x_j}$ corresponds to the diffusion on the direction $Du_1$, which could be thought of as the ``space'' direction of the parabolic equation. Secondly, the drift term $(D^2 u_1 Du_1-(u_1-u_2)Du_1)\cdot Dv_1$ in the direction $(D^2 u_1 Du_1-(u_1-u_2)Du_1)$ perpendicular to $Du_1$ plays the role of ``time'' derivative of the parabolic equation. Intuitively, we can think of this system as a weakly coupled system of one-dimensional linear heat equations
$$
\begin{cases}
(v_1)_t-(v_1)_{xx}+v_1-v_2=0\\
(v_2)_t-(v_2)_{xx}+v_2-v_1=0.
\end{cases}
$$
Of course in this setting, either the diffusion or the drift can be degenerate. For example, the drift vanishes in the case of the generalized cones $(\psi_1,\psi_2)$ and we thus can think of this case as the stationary/degenerate elliptic case. On the other hand, if $|Du_1|=0$, then it corresponds to a ``singular case'' where everything vanishes in $L_1[v_1,v_2]$. We can think of this as the case at ``time-like'' infinity, which means that singularities are not reachable after a finite time. Therefore, at least at the heuristic level, regularity for solutions is expected.
We notice however that this is just a heuristic justification.
In this system, it is much more complicated because the ``time" and ``space" directions of 
the two equations in the linearized system are completely different.
Furthermore, the involvement of the coupling terms with the gradients in the ``time" directions
will make the analysis much more complicated. 


\section{Appendix}
\begin{proof}[Proof of Lemma {\rm\ref{lem:Hopf}}]
We note first that $u_1-0$ has a minimum at $x_0$, hence
$$
0+u_1(x_0)-u_2(x_0) \geq 0,
$$
which implies that $u_2(x_0)=u_1(x_0)=\min_{i=1,2} \min_{x\in U} u_i(x)$. By subtracting a constant if necessary, assume that $u_1(x_0)=u_2(x_0)=0$.

By the interior ball condition, there exists $R>0$ and $\ol{x}\in V$ 
such that $B(0,R) \subset V$ and $x_0 \in \partial B(\ol{x},R)$. 
We may assume $\ol{x}=0$ without loss of generality. 
We now build a specific subsolution of \eqref{INF}, which is a classical way to 
prove the strong maximum principle. 
For $\al>0$ to be chosen, set
$$
w(x):=e^{-\al |x|^2} - e^{-\al R^2} \quad \text{for} \ x \in A_R:=\{x \,:\, R/2 <|x| < R\}.
$$

We can easily see that $(w,w)$ is a strict subsolution of \eqref{INF} in $A_R$ for $\al$ sufficiently large by explicit computation. 
Indeed, for $x\in A_R$,
\begin{equation}\label{w-w-super}
-\Del_\infty w + w - w = e^{-\al |x|^2} (2 \al - 4 \al^2 |x|^2) \leq e^{-\al|x|^2} (2 \al - \al^2 R^2)<0
\end{equation}
for $\al>2 R^{-2}$.

We claim next that $u_i \geq w$ in $A_R$ for $i=1,2$ for $\al$ sufficiently large. Note that as $u_i>0$ in $V$, we can choose $\al$ very large so that $w(x)<u_i$ for $x \in \partial B(0,R/2)$ for $i=1,2$. Besides, $w \equiv 0 \leq u_i(x)$ on $\partial B(0,R)$. 
We argue by contradiction. Suppose that 
$$
\min_{i=1,2} \min_{A_R} (u_i-w)<0.
$$
Then, in view of the above observations, there exists $x_1 \in A_R$ and $k\in\{1,2\}$ so that
\begin{equation}\label{w-w-con}
(u_k-w)(x_1)=\min_{i=1,2} \min_{A_R} (u_i-w)<0.
\end{equation}
This implies in particular that $u_k(x_1) \leq u_l(x_1)$ for $l=3-k$. 
Noting that $Dw(x_1)\not=0$, 
by the definition of viscosity supersolutions,
\begin{equation*}
0 \leq-\Del_\infty^- w(x_1)+ u_k(x_1)-u_l(x_1) \leq -\Del_\infty^- w(x_1)= -\Del_\infty w(x_1),
\end{equation*}
which contradicts \eqref{w-w-super}. 

Thus $w \le u_1$ in $A_R$ and $w(x_0)=u_1(x_0)=0$, 
which implies $u_1-w$ has a minimum at $x_0$. 
Furthermore,
\[
\liminf_{s\to0}\frac{u_1(x_0-s\nu(x_0))-u_1(x_0)}{s}
\ge 
\liminf_{s\to0}\frac{w(x_0-s\nu(x_0))-w(x_0)}{s}
>0.
\]
The proof is complete.
\end{proof}

\begin{proof}[Proof of Theorem {\rm\ref{thm:SMP}}]
Set $m=\min_{i=1,2} \min_{U} u_i$ and  $W:=\{x \in U \,:\, \min_{i=1,2} u_i(x)=m\}$. We first note that for $x\in W$, we actually have $u_1(x)=u_2(x)=m$ as already observed in the above proof.
We  observe next that we could assume $u_i$ are semi-concave for $i=1,2$. If not, one could always replace $u_i$ by its inf-convolution, which is defined as, for some $\beta>0$ sufficiently small,
\begin{equation*}
u_i^\beta(x)=\inf_{y \in U} \left( u_i(y)+\frac{|y-x|^2}{\beta^2}\right).
\end{equation*}
Notice that $u_i^\beta(x)=u_i(x)=m$ for $x\in W$ and $i=1,2$, and $(u_1^\beta,u_2^\beta)$ is a supersolution of \eqref{INF}.
Also in light of (DMP), $u_i^\beta$ are differentiable at $x\in W$ and $Du_i^\beta(x)=0$.

Assume $u_i$ are semi-concave henceforth.
Define $V:=\{x\in U\,:\, \min_{i=1,2} u_i(x)>m\}$.
Now suppose that $V \neq \emptyset$, then we could choose a point $y\in V$ such that $\dist(y,W)<\dist(y,\partial U)$, and let $B(y,R)$ be the largest ball with center $y$ lying in $V$. Then there exists $x_0 \in W \cap \partial B(y,R)$. 
In light of the Hopf Lemma, 
\[
\liminf_{s\to0}\frac{u_1(x_0-s\nu(x_0))-u_1(x_0)}{s}
>0,
\] 
which gives the contradiction as $Du_1(x_0)=0$.
\end{proof}


\begin{thebibliography}{30} 
\bibitem{AS}
S. N. Armstrong, C. K. Smart, 
An easy proof of Jensen's theorem on the uniqueness of infinity harmonic functions, 
\emph{Calc. Var. Partial Differential Equations} 37 (2010), no. 3-4, 381--384.

\bibitem{A1}
G. Aronsson, On the partial differential equation {$u_x{}^{2}\!u_{xx} +2u_xu_yu_{xy}+u_y{}^{2}\!u_{yy}=0$}, 
\emph{Ark. Mat.} 7 (1968), 395--425. 

\bibitem{A2}
G. Aronsson, Extension of functions satisfying Lipschitz conditions, 
\emph{Ark. Mat.} 6 (1967), 551--561. 


\bibitem{BB}
G. Barles, J. Busca, 
Existence and comparison results for fully nonlinear degenerate elliptic equations without zeroth-order term, 
\emph{Comm. Partial Differential Equations} 26 (2001), no. 11-12, 2323--2337. 

\bibitem{BEJ}
E. N. Barron, L. C. Evans, R. Jensen, 
The infinity Laplacian, Aronsson's equation and their generalizations, 
\emph{Trans. Amer. Math. Soc.} 360 (2008), no. 1, 77--101. 

\bibitem{C}
M. G. Crandall, 
\emph{A visit with the $\infty$-Laplace equation, 
Calculus of variations and nonlinear partial differential equations}, 
75--122, Lecture Notes in Math., 1927, Springer, Berlin, 2008.

\bibitem{CEG}
M. G. Crandall, L. C. Evans, R. F. Gariepy, 
Optimal Lipschitz extensions and the infinity Laplacian, 
\emph{Calc. Var. Partial Differential Equations} 13 (2001), no. 2, 123--139. 

\bibitem{D}
M. H. A. Davis, 
Piecewise-deterministic Markov processes: a general class of nondiffusion stochastic models, 
\emph{J. Roy. Statist. Soc. Ser. B} 46 (1984), no. 3, 353--388.


\bibitem{ESa}
L.C. Evans, O. Savin,
$C^{1,\al}$ regularity for infinity harmonic functions in two dimensions,
\emph{Calc. Var. Partial Differential Equations} 32 (2008), no. 3, 325--347. 

\bibitem{ESm}
L. C. Evans, C. K. Smart, 
Adjoint methods for the infinity Laplacian partial differential equation, 
\emph{Arch. Ration. Mech. Anal.} 201 (2011), no. 1, 87--113. 

\bibitem{J}
R. Jensen, 
Uniqueness of Lipschitz extensions: minimizing the sup norm of the gradient, 
\emph{Arch. Rational Mech. Anal.} 123 (1993), no. 1, 51--74.


\bibitem{MT2}
H. Mitake, H. V. Tran, 
Homogenization of weakly coupled systems of Hamilton--Jacobi equations with fast switching rates, 
\emph{Arch. Ration. Mech. Anal.} 211 (2014), no. 3, 733--769.

\bibitem{MT3}
H. Mitake, H. V. Tran, 
A dynamical approach to the large-time behavior of solutions 
to weakly coupled systems of Hamilton--Jacobi equations, 
\emph{J. Math. Pures Appl.} (9) 101 (2014), no. 1, 76--93.

\bibitem{PSSW}
Y. Peres, O. Schramm, S.  Sheffield, D.  Wilson,
Tug-of-war and the infinity Laplacian,
\emph{J. Amer. Math. Soc.} 22 (2009), no. 1, 167--210. 

\bibitem{Sa}
O. Savin, 
$C^1$ regularity for infinity harmonic functions in two dimensions,
\emph{Arch. Ration. Mech. Anal.} 176 (2005), no. 3, 351--361.

\end {thebibliography}

\end{document}